\documentclass[12pt, reqno]{amsart}

\title{On maximal families of independent sets with respect to asymptotic density}

\usepackage[T1]{fontenc}
\usepackage{amsmath}
\usepackage{amssymb}
\usepackage{amsthm}
\usepackage[left=3.5cm, right=3.5cm]{geometry} %ORIGINAL
\usepackage{hyperref}
\usepackage{fancyhdr}
\usepackage{enumitem}
\usepackage{comment}
\usepackage{nicefrac}
\usepackage{comment}
\usepackage{bm}
\usepackage{mathrsfs}
\usepackage{graphicx}
\usepackage[utf8]{inputenc}
\usepackage{cancel}
\usepackage{mathtools}

\usepackage{tikz}
\usetikzlibrary{decorations.pathreplacing,matrix,arrows,positioning,automata,shapes,shadows,calc,fadings,decorations,snakes,through,intersections}

\AtBeginDocument{%
   \def\MR#1{}
}

\newtheorem{thm}{Theorem}[section]
\newtheorem{cor}[thm]{Corollary}%[section]

\newtheorem{prop}[thm]{Proposition}

\theoremstyle{definition} 
\newtheorem{defi}[thm]{Definition}%[section]
\let\olddefi\defi
\renewcommand{\defi}{\olddefi\normalfont}
\newtheorem{question}{Question}%[section]
\let\oldquestion\question
\renewcommand{\question}{\oldquestion\normalfont}
\newtheorem{example}[thm]{Example}
\let\oldexample\example
\renewcommand{\example}{\oldexample\normalfont}
\newtheorem{rmk}[thm]{Remark}
\let\oldrmk\rmk
\renewcommand{\rmk}{\oldrmk\normalfont}
\newtheorem{claim}{\textsc{Claim}}

\renewcommand{\ast}{\star}

{\textsc{J. M. Keith} and \textsc{P. Leonetti}}
\hypersetup{
    pdftitle={},
    pdfauthor={},
    pdfmenubar=false,
    pdffitwindow=true,
    pdfstartview=FitH,
    colorlinks=true,
    linkcolor=blue,      %%%%%
    citecolor=green,
}

\providecommand{\MR}[1]{}

\providecommand{\MR}{\relax\ifhmode\unskip\space\fi MR }

\author[J.~M.~Keith]{Jonathan M. Keith}
\address{School of Mathematics, Monash University, Wellington Rd, Clayton VIC 3800, Australia}
\email{jonathan.keith@monash.edu}

\author[P.~Leonetti]{Paolo Leonetti}
\address{Department of Economics, Universit\`a degli Studi dell'Insubria, via Monte Generoso 71, 21100 Varese, Italy}
\email{leonetti.paolo@gmail.com}

\keywords{asymptotic density; independent families; $\mathsf{d}$-independence; maximal $\mathsf{d}$-independent families on $\omega$.}

\subjclass[2010]{Primary: 03E05. Secondary: 03E17, 11B05.}
\begin{document}

\maketitle
\thispagestyle{empty}

\begin{abstract}
    \noindent 
    We study families of subsets of $\omega$ which are independent with respect to the asymptotic density $\mathsf{d}$.
    We show, for instance, that there exists a maximal $\mathsf{d}$-independent family $\mathcal{A}$ such that $\mathsf{d}[\mathcal{A}]$ attains a prescribed set of values in $(0,1)$ with at most countably many exceptions. In addition, under $\mathrm{cov}(\mathcal{N})=\mathfrak{c}$, it is possible to construct such $\mathcal{A}$ with no exceptions. 
    We also construct $2^{\mathfrak{c}}$ maximal $\mathsf{d}$-independent families with pairwise distinct generated density fields and obtain maximal families with strong definability pathologies, including examples without the Baire property and, consistently, nonmeasurable examples. 
\end{abstract}

\section{Introduction}\label{sec:intro}

Let $\mathcal{D}$ be the family of sets of natural numbers which admit asymptotic density, that is, the family of sets $A\subseteq \omega:=\{0,1,2,\ldots\}$ for which the limit 
$$
\mathsf{d}(A):=\lim_{n\to \infty}\frac{|A\cap n|}{n}
$$
exists. (As usual we identify $n\in \omega$ with $\{0,1,\ldots,n-1\}$, so that $|A\cap n|$ counts the elements of $A$ which are $<n$.) In other words, $\mathcal{D}$ represents the domain $\mathrm{dom}(\mathsf{d})$ of the asymptotic density map $\mathsf{d}$, see e.g. \cite{MR1845008, MR4054777}. It is folklore that $\mathcal{D}$ is stable under complements and finite disjoint unions, but not under finite unions (or, equivalently, finite intersections); in particular, $\mathcal{D}$ is not a field of sets. 

In this work, we introduce and study the notion of [maximal] $\mathsf{d}$-independent family $\mathcal{A}\subseteq \mathcal{D}$. Roughly, $\mathcal{A}$ will represent a family of \textquotedblleft independent events\textquotedblright\, on the space $\mathcal{D}$ with respect to the finitely additive map $\mathsf{d}$. 
Hereafter, we denote by $\langle \mathcal{A}\rangle$ the field of sets generated by $\mathcal{A}$.  
For each $A\subseteq \omega$ and $n \in \omega$, we write $A^1:=A$ and $A^0:=\omega\setminus A$, and $n\cdot A:=\{na: a \in A\}$.
Our results will be given within $\mathsf{ZFC}$; additional set theory axioms will be always explicit. 
\begin{defi}\label{def:independent}
    A nonempty family $\mathcal{A}\subseteq \mathcal{D}$ is said to be $\mathsf{d}$-\emph{independent} if: 
    \begin{enumerate}[label={\rm (\textsc{d}\arabic*)}]
        \item \label{defi:1indep} $\mathsf{d}(A) \in (0,1)$ for all $A \in \mathcal{A}$;
        \item \label{defi:2indep} $\langle \mathcal{A}\rangle\subseteq \mathcal{D}$; 
        \item \label{defi:3indep} 
        $\mathsf{d}(\bigcap \mathcal{F}) = \prod_{A \in \mathcal{F}}\mathsf{d}(A)$ for all nonempty finite subfamily $\mathcal{F} \subseteq \mathcal{A}$. 
    \end{enumerate}
\end{defi}
A trivial example of a $\mathsf{d}$-independent family is $\mathcal{A}=\{p\cdot \omega: p \text{ prime}\}$. 
A variant of the above definition for arbitrary subfamilies of $\mathcal{P}(\omega)$, which has been termed \textquotedblleft *-independence,\textquotedblright\, appeared in \cite[Definition 3.2]{MR4609469} and \cite[Definition 1.5]{Valderrama}. 
It is worth remarking that, if $|\mathcal{A}|\ge 2$, item \ref{defi:2indep} cannot be replaced by: 
\begin{enumerate}[label={\rm (\textsc{d}\arabic*$^\natural$)}]
\setcounter{enumi}{1}
\item \label{defi:2indepflat} $\mathcal{A}$ is stable under finite intersections.
\end{enumerate}
In fact, if $A,B \in \mathcal{A}$ are distinct then $C:=A\cap B \in \mathcal{A}$ and $\mathsf{d}(C)=\mathsf{d}(A\cap C)=\mathsf{d}(A)\mathsf{d}(C)$. This would contradict item \ref{defi:1indep}. 

At the same time, observe that, if $\mathcal{A}=\{A_\alpha: \alpha<\kappa\}$ is a $\mathsf{d}$-independent family, then item \ref{defi:3indep} can be rewritten as $\mathsf{d}(\bigcap_{\alpha \in F} A_\alpha^1) = \prod_{\alpha \in F}\mathsf{d}(A_\alpha^1)$ for each nonempty finite $F\subseteq \kappa$. However, since $\langle \mathcal{A}\rangle \subseteq \mathcal{D}$, then also finite intersections of sets of the type $A_\alpha^1$ or $A_\alpha^0$ belong to $\mathcal{D}$. 
\begin{comment}
It is straightforward to obtain that, if $F\subseteq \kappa$ is nonempty finite, then
\begin{equation}\label{eq:d3sharp}
\forall \sigma \in 2^F, \qquad 
\mathsf{d}\left(\bigcap_{\alpha \in F} A_\alpha^{\sigma(\alpha)}\right) 
=\prod_{\alpha \in F}\mathsf{d}\left(A_\alpha^{\sigma(\alpha)}\right). 
\end{equation} 
Of course, \eqref{eq:d3sharp} corresponds to item \ref{defi:3indep} if $\sigma=1$. 
It also follows that each $\bigcap_{\alpha \in F} A_\alpha^{\sigma(\alpha)}$ is necessarily infinite, hence $\mathcal{A}$ is a (classical) independent family of subsets of $\omega$, see e.g. \cite{MR2768685, MR4510832}. 
%
Our main results are given in Section \ref{sec:mainresults}. The proofs are presented in Section \ref{sec:proofs}. 
\end{comment}
It is straightforward to obtain that, if $F\subseteq \kappa$ is nonempty finite, then
\begin{equation}\label{eq:d3sharp}
\forall \sigma \in 2^F, \qquad 
\mathsf{d}\left(\bigcap_{\alpha \in F} A_\alpha^{\sigma(\alpha)}\right) 
=\prod_{\alpha \in F}\mathsf{d}\left(A_\alpha^{\sigma(\alpha)}\right). 
\end{equation} 
Of course, \eqref{eq:d3sharp} corresponds to item \ref{defi:3indep} if $\sigma=1$. 
\begin{rmk}\label{rmk:relationshipindependent}
    It also follows from \eqref{eq:d3sharp} that, if $\mathcal{A}=\{A_\alpha: \alpha<\kappa\}$ is a $\mathsf{d}$-independent family, then each $\bigcap_{\alpha \in F} A_\alpha^{\sigma(\alpha)}$ is necessarily infinite, that is, $\mathcal{A}$ is a (classical) independent family of subsets of $\omega$, see e.g. \cite{MR2768685, MR4510832}. 

    Vice versa, if $\mathcal{B}=\{B_\alpha:\alpha<\kappa\}$ is an independent family on $\omega$, then it is possible to construct a $\mathsf{d}$-independent family $\mathcal{A}=\{A_\alpha: \alpha<\kappa\}$ as follows. 
    Choose pairwise disjoint consecutive finite intervals $(I_m: m \in\omega)$ such that $|I_m|=2^m(m+1)!$ for all $m \in\omega$. For each $m \in\omega$, partition each $I_m$ into $2^m$ subsets $\{I_{m,\sigma}: \sigma\in 2^m\}$ such that each $I_{m,\sigma}$ is a translation of $I_m\cap (2^m\cdot \omega)$. Then, define 
    $$
    \forall \alpha<\kappa, \quad \quad 
    A_\alpha:=\bigcup_{m \in \omega}\bigcup_{\sigma \in S_{\alpha,m}}I_{m,\sigma},
    $$
    where $S_{\alpha,m}:=\left\{\sigma \in 2^m: \sum_{i \in B_\alpha \cap m}\sigma(i) \text{ is odd}\right\}$. Then $\mathcal{A}=\{A_\alpha: \alpha<\kappa\}$ is a $\mathsf{d}$-independent family. Indeed, fix distinct $\alpha_0,\ldots,\alpha_{k-1}<\kappa$. Since every nontrivial 
    intersection of finitely many of the $B_{\alpha}$'s and their complements 
    is infinite, then $B_{\alpha_j}\setminus \bigcup_{t \in k, t\neq j}B_{\alpha_t}$ is nonempty for each $j \in k$. Hence  
    there is $m_0\in \omega$ such that for all $m\ge m_0$, the vectors $\{\bm{1}_{B_{\alpha_j}\cap m}: j \in k\}$ are linearly independent in $\mathbb F_2^m$. Fix now $m\ge m_0$ and consider the linear map $T_m:\mathbb F_2^m\longrightarrow \mathbb F_2^k$ defined by
$$
\forall \tau \in \mathbb F_2^m, \quad 
T_m(\tau)
:=
\left(
\sum_{i\in B_{\alpha_j}\cap m}\tau(i): j \in k
\right).
$$ 
By the linear independence of $\{\bm{1}_{B_{\alpha_j}\cap m}: j \in k\}$, $T_m$ has rank $k$, hence $T_m$ is surjective and by rank-nullity theorem $|\mathrm{Ker}(T_m)|=2^{m-k}$. 
Hence, for every $\sigma\in 2^k$, exactly $2^{m-k}$ many $\tau\in 2^m$ satisfy $\Bigl(\sum_{i\in B_{\alpha_j}\cap m}\tau(i): j \in k\Bigr)=\sigma$. 
Therefore, on each block $I_m$ with $m\ge m_0$, every atom
$\bigcap_{j\in k}  A_{\alpha_j}^{\sigma(j)}$ occupies exactly a $2^{-k}$-proportion of $I_m$. 
Since $2^m=o\left(\sum_{j \in m}|I_j|\right)$ and $\sum_{j \in m}|I_j|=o(|I_m|)$ as $m\to \infty$, it easily follows that
$$
\forall \sigma \in 2^k, \quad 
\mathsf d\!\left(\bigcap_{j\in k} A_{\alpha_j}^{\sigma(j)}\right)=2^{-k}.
$$
In particular, each $A_\alpha$ has density $\nicefrac{1}{2}$ and $\mathcal{A}$ is a $\mathsf{d}$-independent family with cardinality $\mathfrak{c}$. The latter conclusion will be strengthened in several directions, see Proposition \ref{prop:continuum_indep}, Theorem \ref{thm:jon}, Corollary \ref{cor:existenceonehalfmaximal}, and Theorem \ref{thm:imagefieldsfidtsinct} below. 
\end{rmk}

Our main results are given in Section \ref{sec:mainresults}. The proofs are presented in Section \ref{sec:proofs}. 

%%%%%%%%%%%%%%%%%%%%%%%%%%%%%%%%%%%%%%%%%%%
\section{Main results}\label{sec:mainresults}

Our first result proves that, for each nonempty $S\subseteq (0,1)$, there exists a $\mathsf{d}$-independent family $\mathcal{A}$ such that $|\mathcal{A}|=\mathfrak{c}$ and $\mathsf{d}[\mathcal{A}]=S$.

\begin{prop}\label{prop:continuum_indep}
Fix $(p_\alpha: \alpha<\mathfrak{c})$ with values in $(0,1)$. Then there exists a $\mathsf d$-independent family $\{A_\alpha: \alpha<\mathfrak{c}\}$ such that $\mathsf{d}(A_\alpha)=p_\alpha$ for each $\alpha<\mathfrak{c}$.
\end{prop}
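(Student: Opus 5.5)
Following the idea of Remark \ref{rmk:relationshipindependent}, we replace the parity construction by a probabilistic/Lebesgue-measure coding. The plan is to build the sets as preimages of independent events under a single uniformly distributed map.

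\textbf{Step 1: a map into $2^{\mathfrak c}$-indexed coordinates.} Fix an independent family $\{B_\alpha:\alpha<\mathfrak c\}$ on $\omega$. Choose consecutive intervals $(I_m)$ with $|I_m|$ growing fast, so that $\sum_{j<m}|I_j|=o(|I_m|)$. Also arrange that $|I_m|$ is divisible by a large number $N_m^{\,m}$, where $N_m\to\infty$.

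On $I_m$ we realize $m$ i.i.d. coordinates $u_0,\dots,u_{m-1}$, each uniform on the grid $\{0,1/N_m,\dots,(N_m-1)/N_m\}$. Concretely, partition $I_m$ into $N_m^m$ translates of an arithmetic progression, indexed by $\tau\in N_m^m$, exactly as the $I_{m,\sigma}$ are built in the remark.

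\textbf{Step 2: combining coordinates to get each $A_\alpha$.} For $\alpha$, put
\[
v_\alpha(\tau):=\Bigl(\sum_{i\in B_\alpha\cap m}\tau(i)/N_m\Bigr)\bmod 1 .
\]
Define $A_\alpha\cap I_m$ to be the union of the pieces indexed by those $\tau$ with $v_\alpha(\tau)<p_\alpha$.

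The key algebraic fact is the analogue of the rank argument in the remark. Fix distinct $\alpha_0,\dots,\alpha_{k-1}$. For $m\ge m_0$ each $B_{\alpha_j}\cap m$ contains a point $i_j$ not in the other $B_{\alpha_t}$'s. Hence the map $\tau\mapsto (v_{\alpha_j}(\tau))_{j<k}$ from $(\mathbb Z/N_m)^m$ to $(\mathbb Z/N_m)^k$ is a surjective group homomorphism. Therefore it pushes counting measure forward to the uniform distribution on the grid $(\frac1{N_m}\mathbb Z/\mathbb Z)^k$.

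It follows that, for every $\sigma\in 2^k$, the atom $\bigcap_j A_{\alpha_j}^{\sigma(j)}$ occupies a fraction of each $I_m$ equal to
\[
\prod_{j}\Bigl(\lceil p_{\alpha_j}N_m\rceil/N_m\Bigr)^{\sigma(j)}\Bigl(1-\lceil p_{\alpha_j}N_m\rceil/N_m\Bigr)^{1-\sigma(j)} .
\]
This fraction tends to $\prod_j \mathsf d(A_{\alpha_j}^{\sigma(j)})$-type values as $N_m\to\infty$.

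\textbf{Step 3: from blockwise proportions to density.} Each piece indexed by $\tau$ is a progression of step $N_m^m$ inside $I_m$. So within each block the counting function of an atom is uniformly close to linear, with error $O(N_m^m)$. Choose $|I_m|$ so large that $N_m^m=o(|I_m|)$ and $N_m^m=o\bigl(\sum_{j<m}|I_j|\bigr)$; for instance take $N_m=m+1$ and $|I_m|$ a multiple of $(m+1)^m$ exceeding $m\cdot(m+1)^m\sum_{j<m}|I_j|$. Then the standard estimate used in the remark gives that the density of each atom exists and equals the limit of the block proportions.

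Since every set in $\langle\{A_\alpha\}\rangle$ is a finite disjoint union of atoms, this yields \ref{defi:2indep}. It also yields \ref{defi:3indep} and $\mathsf d(A_\alpha)=p_\alpha$, and \ref{defi:1indep} follows because $p_\alpha\in(0,1)$.

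\textbf{Main obstacle.} I expect the main difficulty to be Step 3: controlling the density at all $n$, not only at block endpoints. This requires that within a block the prefix counts of each atom are uniformly close to the proportional values. I would handle this with the progression structure: any prefix of $I_m$ meets each of the $N_m^m$ pieces in nearly the same number of points, up to an error of $1$ per piece. This gives an error of at most $N_m^m$, which is negligible against $\sum_{j<m}|I_j|$ by the growth choice. A secondary point is that the limits only need $N_m\to\infty$, since the grid rounding $\lceil p N_m\rceil/N_m\to p$.
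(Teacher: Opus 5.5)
Your proof is correct, but it takes a genuinely different route from the paper. The paper fixes a Hamel basis $H\ni 1$ of $\mathbb{R}$ over $\mathbb{Q}$ and enumerates $H\setminus\{1\}=\{i_\alpha:\alpha<\mathfrak c\}$. It puts $A_\alpha:=\{n\in\omega:\{n i_\alpha\}<p_\alpha\}$ and gets every atom density at once from the multidimensional Kronecker--Weyl theorem: the vectors $(\{n i_\alpha\})_{\alpha\in F}$ are equidistributed in $[0,1)^F$, so an atom has density equal to the volume of a box.

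You instead build a finite, blockwise model of the same mechanism. The torus becomes $(\mathbb{Z}/N_m)^m$ on $I_m$. $\mathbb{Q}$-linear independence is replaced by the combinatorial independence of $\{B_\alpha\}$: the private points $i_j$ make $\tau\mapsto(v_{\alpha_j}(\tau))_{j<k}$ a surjective homomorphism, so equidistribution on the grid is exact on each late block rather than asymptotic. The passage from block proportions to densities is done by hand, through the progression structure and $N_m^m=o(\sum_{j<m}|I_j|)$. Your explicit choice $N_m=m+1$ does guarantee this, since $(m+1)^m/\big((m-1)m^{m-1}\big)\to e$ while $\sum_{j<m-1}|I_j|\to\infty$.

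The paper's argument is shorter because it hands all the analysis to a classical equidistribution theorem. Yours is elementary and self-contained, and it is exactly the extension of the parity coding of Remark \ref{rmk:relationshipindependent} from density $\nicefrac12$ to arbitrary densities. It also has a structural bonus. For a fixed $p$, whether a point of $I_m$ lies in $A_B$ depends only on $B\cap m$, so $B\mapsto A_B$ is continuous. Starting from a perfect independent family, you therefore get perfect $\mathsf d$-independent families with any prescribed constant density, which is the kind of input used in the proof of Theorem \ref{thm:notbaire}.

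Two cosmetic remarks. In Step 2 the limit should be written as $\prod_j p_{\alpha_j}^{\sigma(j)}(1-p_{\alpha_j})^{1-\sigma(j)}$, not in terms of the densities you are trying to compute. The growth condition $\sum_{j<m}|I_j|=o(|I_m|)$ is not actually needed once you have the $O(N_m^m)$ prefix estimate and a Ces\`aro argument over the blocks.
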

As a special case, we recover a recent result by Frankiewicz and Jureczko where they show in \cite[Lemma 9]{Frank21} that there exists a $\mathsf{d}$-independent family $\mathcal{A}$ of cardinality $\mathfrak{c}$ such that $\mathsf{d}(A)=\nicefrac{1}{2}$ for all $A \in \mathcal{A}$ (using different techniques); a related weaker claim appeared also in \cite[Corollary 4.11]{MR4609469}. %We refer also to \cite{MR1388771} for a connection between equidistributed sequences and asymptotic density, which is used in the proof of Proposition \ref{prop:continuum_indep}.

Observe that, if $(\mathcal{A}_\alpha: \alpha<\kappa)$ is an increasing chain of $\mathsf{d}$-independent families, then $\bigcup_\alpha \mathcal{A}_\alpha$ is still $\mathsf{d}$-independent. Hence, by Zorn's lemma, 
maximal $\mathsf{d}$-independent families exist. Accordingly, let us define 
$$
\mathscr{M}:=\left\{\mathcal{A}\subseteq \mathcal{D}: \mathcal{A} \text{ is a maximal }\mathsf{d}\text{-independent family}\right\}
$$
and denote by $\mathfrak{i}_{\mathsf{d}}$ the least cardinality of a maximal $\mathsf{d}$-independent family. 
\begin{rmk}\label{rmk:infinite}
Note that each $\mathcal{A} \in \mathscr{M}$ is infinite. Indeed, if $\mathcal{A}:=\{A_i: i \in k\}$ is a $\mathsf{d}$-independent family then for each $\sigma \in 2^k$ there exists a subset $B_\sigma$ of $A_\sigma:=\bigcap_i A_i^{\sigma(i)}\in \mathcal{D}$ for which $B_\sigma\in \mathcal{D}$ and $\mathsf{d}(B_\sigma)=\frac{1}{2}\mathsf{d}(A_\sigma)$.\footnote{In fact, if $X\in \mathcal{D}$ is an infinite set with increasing enumeration $(x_n: n \in \omega)$ then $\mathsf{d}(\{x_{2n}: n \in \omega\})=\mathsf{d}(X)/2$. This property has been termed \textquotedblleft thinnability\textquotedblright\, by van Douwen in \cite[p.225]{MR1192311}; cf. also \cite{MR3594409, 
%}. %
%, 
MR3879311}.
} 
It follows that $B:=\bigcup_\sigma B_\sigma \in \mathcal{D}$, $\mathsf{d}(B)=\nicefrac{1}{2}$, and the enlarged family $\mathcal{A}\cup \{B\}$ is also $\mathsf{d}$-independent. This claim will be improved in 
Corollary \ref{cor:ordercardinals} and 
Corollary \ref{cor:extension1} 
below, 
where we obtain that every maximal $\mathsf{d}$-independent family is necessarily uncountable and that every $\mathsf{d}$-independent family with cardinality smaller than $\mathfrak{i}_{\mathsf{d}}$ can be enlarged with another set in $\mathcal{D}$ with prescribed value in $(0,1)$. 
\end{rmk}

    Following \cite[Definition 2.2 and Definition 3.4]{MR4609469}, we write $\mathfrak{r}_{\nicefrac{1}{2}}$ for the least cardinality of a $\nicefrac{1}{2}$-reaping family $\mathcal{R}\subseteq [\omega]^\omega$, that is, a family for which there is no infinite $S \subseteq \omega$ such that 
    \begin{equation}\label{eq:bisect-R}
\lim_{n\to\infty}\frac{|S\cap B\cap n|}{|B\cap n|}=\frac12
\qquad\text{for all } B\in\mathcal{R}.
\end{equation} 

Recall from \cite[Theorem 3.8]{MR4609469} that $\aleph_1 \le \mathfrak{r}_{\nicefrac{1}{2}}$ and that, consistently, $\aleph_1 < \mathfrak{r}_{\nicefrac{1}{2}}$. 
\begin{thm}\label{thm:1}
    $\mathfrak{r}_{\nicefrac{1}{2}}\le \mathfrak{i}_{\mathsf{d}}$.
\end{thm}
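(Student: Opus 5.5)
Let $\mathcal{A}$ be a $\mathsf{d}$-independent family with $|\mathcal{A}|<\mathfrak{r}_{\nicefrac{1}{2}}$. The plan is to show that $\mathcal{A}$ is not maximal, which gives $\mathfrak{r}_{\nicefrac{1}{2}}\le \mathfrak{i}_{\mathsf{d}}$. First I would dispose of the finite case: by Remark \ref{rmk:infinite} every finite $\mathsf{d}$-independent family can be enlarged. So assume $\mathcal{A}$ is infinite.

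Next consider the family of atoms
$$
\mathcal{R}:=\Bigl\{\textstyle\bigcap_{A\in\mathcal{F}}A^{\sigma(A)}: \mathcal{F}\subseteq\mathcal{A} \text{ finite},\ \sigma\in 2^{\mathcal{F}}\Bigr\}\cup\{\omega\}.
$$
Each member is infinite by Remark \ref{rmk:relationshipindependent}, so $\mathcal{R}\subseteq[\omega]^\omega$. Moreover $|\mathcal{R}|=|\mathcal{A}|<\mathfrak{r}_{\nicefrac{1}{2}}$, using that $\mathcal{A}$ is infinite. Hence $\mathcal{R}$ is not $\nicefrac{1}{2}$-reaping, and there is an infinite $S\subseteq\omega$ satisfying \eqref{eq:bisect-R} for every $B\in\mathcal{R}$.

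The candidate new set is $S$ itself, with $\mathsf{d}(S)=\nicefrac12$ from $B=\omega$. Let $B\in\mathcal{R}$ be an atom. Since $B\in\mathcal{D}$ with $\mathsf{d}(B)>0$, we can write
$$
\frac{|S\cap B\cap n|}{n}=\frac{|S\cap B\cap n|}{|B\cap n|}\cdot\frac{|B\cap n|}{n}\to \tfrac12\,\mathsf{d}(B).
$$
Thus $S\cap B\in\mathcal{D}$ with $\mathsf{d}(S\cap B)=\tfrac12\mathsf{d}(B)$. Because $B\in\mathcal{D}$, also $S^0\cap B=B\setminus(S\cap B)\in\mathcal{D}$, with density $\tfrac12\mathsf{d}(B)$.

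Then I would verify that $\mathcal{A}\cup\{S\}$ is $\mathsf{d}$-independent. Every element of $\langle\mathcal{A}\cup\{S\}\rangle$ is a finite disjoint union of sets of the form $S^{i}\cap B$ with $B$ an atom over a finite subfamily. The empty intersection counts as the atom $\omega$, and atoms over a fixed finite $\mathcal{F}$ are disjoint. Since $\mathcal{D}$ is closed under finite disjoint unions, this gives \ref{defi:2indep}. Condition \ref{defi:3indep} follows from $\mathsf{d}(S\cap\bigcap\mathcal{F})=\tfrac12\prod_{A\in\mathcal{F}}\mathsf{d}(A)$, and \ref{defi:1indep} holds since $\mathsf{d}(S)=\nicefrac12$.

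It remains to check $S\notin\mathcal{A}$. Suppose $S=A\in\mathcal{A}$. Taking $B=A$ gives $\mathsf{d}(A\cap A)=\tfrac12\mathsf{d}(A)$, contradicting $\mathsf{d}(A)>0$. So $\mathcal{A}$ is properly extended and is not maximal.

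The main obstacle is ensuring \ref{defi:2indep} for the enlarged field: the density must exist on every Boolean combination, not merely on intersections. Including all signed atoms in $\mathcal{R}$, rather than only positive intersections, is what handles this, together with the disjoint-union decomposition.
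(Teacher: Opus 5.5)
Your proof is correct and is essentially the paper's argument in contrapositive form: the paper fixes a maximal $\mathcal{A}$, takes $\mathcal{R}$ to consist only of the positive intersections $\bigcap\mathcal{F}$ together with $\omega$, and shows that a bisecting $S$ would extend $\mathcal{A}$. Including all signed atoms in $\mathcal{R}$ makes \ref{defi:2indep} immediate, whereas with positive intersections only one needs a short inclusion--exclusion step that the paper leaves as ``easily follows''; your explicit check that $S\notin\mathcal{A}$ also spells out a point the paper leaves implicit.
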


Following the notation in \cite{MR4609469}, let $\mathcal{N}$ be the family of subsets $S\subseteq 2^\omega$ such that $\lambda(S)=0$, where $\lambda$ stands for the completion of the usual product measure on $2^\omega$. Let also $\mathrm{cov}(\mathcal{N})$ be the least cardinality of a family $\mathcal{S}\subseteq \mathcal{N}$ such that $\bigcup \mathcal{S}=2^\omega$. 
Results which are related to covering numbers and the ideal of asymptotic density zero sets can be found e.g. in \cite{MR4099835, MR3615051}. 
\begin{cor}\label{cor:ordercardinals}
    $\aleph_1 \le \mathrm{cov}(\mathcal{N}) \le \mathfrak{r}_{\nicefrac{1}{2}}\le \mathfrak{i}_{\mathsf{d}} 
    \le \mathfrak{c}$. 
    
    In addition, we have consistently $\aleph_1 < \mathrm{cov}(\mathcal{N})$ 
    and $\mathfrak{r}_{\nicefrac{1}{2}}<\mathfrak{c}$. 
\end{cor}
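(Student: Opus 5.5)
The chain of inequalities splits into four links, each handled separately; the consistency claims are then read off from known forcing models.

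\emph{The link $\aleph_1\le \mathrm{cov}(\mathcal{N})$.} This is classical. A countable union of null sets is null, so it cannot cover $2^\omega$.

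\emph{The link $\mathrm{cov}(\mathcal{N})\le \mathfrak{r}_{\nicefrac{1}{2}}$.} This is a result of \cite{MR4609469}, and I would cite it there; I expect it is exactly how the bound $\aleph_1\le\mathfrak{r}_{\nicefrac12}$ recalled above is proved. If a self-contained argument is wanted, fix $\mathcal{R}\subseteq[\omega]^\omega$ with $|\mathcal{R}|<\mathrm{cov}(\mathcal{N})$. For each $B\in\mathcal{R}$, the strong law of large numbers, applied along the increasing enumeration of $B$, shows that
$$
N_B:=\Bigl\{x\in 2^\omega: \lim_{n\to\infty}\tfrac{|x^{-1}(1)\cap B\cap n|}{|B\cap n|}\neq \tfrac12\Bigr\}
$$
is $\lambda$-null. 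Since $|\mathcal{R}|<\mathrm{cov}(\mathcal{N})$, the sets $N_B$ do not cover $2^\omega$, so some $x$ avoids all of them. Then $S:=x^{-1}(1)$ satisfies \eqref{eq:bisect-R} for every $B\in\mathcal{R}$, and $S$ is infinite because it bisects an infinite set. Hence $\mathcal{R}$ is not $\nicefrac12$-reaping.

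\emph{The link $\mathfrak{r}_{\nicefrac{1}{2}}\le \mathfrak{i}_{\mathsf{d}}$.} This is Theorem \ref{thm:1}.

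\emph{The link $\mathfrak{i}_{\mathsf{d}}\le\mathfrak{c}$.} This is trivial, since any maximal $\mathsf{d}$-independent family is a subset of $\mathcal{D}\subseteq\mathcal{P}(\omega)$, which has cardinality $\mathfrak{c}$. Maximal families exist by Zorn's lemma, as observed above, so $\mathfrak{i}_{\mathsf{d}}$ is well defined.

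\emph{Consistency of $\aleph_1<\mathrm{cov}(\mathcal{N})$.} This holds in the random real model, obtained by adding $\aleph_2$ random reals to a model of CH. It also follows from Martin's Axiom together with $\mathfrak{c}=\aleph_2$, since MA implies $\mathrm{cov}(\mathcal{N})=\mathfrak{c}$.

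\emph{Consistency of $\mathfrak{r}_{\nicefrac{1}{2}}<\mathfrak{c}$.} I would cite \cite{MR4609469}, where the consistency of $\mathfrak{r}_{\nicefrac12}<\mathfrak{c}$ appears alongside $\aleph_1<\mathfrak{r}_{\nicefrac12}$. The natural model is the Cohen model. There one can argue that the $\aleph_1$ many ground-model infinite sets form a $\nicefrac12$-reaping family: a $\nicefrac12$-splitter of all of them would have to appear in an intermediate extension by countably many Cohen reals, and a fresh Cohen real then yields a ground-type set that the splitter fails to bisect.

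This last step is the only part I expect to need real care. I would simply rely on the cited result rather than redo the forcing argument; everything else in the corollary is a direct assembly of earlier facts.
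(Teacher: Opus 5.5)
Your proposal is correct and follows the paper's route. The paper's proof is just Theorem \ref{thm:1} combined with \cite[Theorem 3.8]{MR4609469}, and your strong-law argument for $\mathrm{cov}(\mathcal{N})\le\mathfrak{r}_{\nicefrac{1}{2}}$, together with the classical links and the random-real/MA model, is a valid self-contained substitute for part of that citation.

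The only loose point is your Cohen-model aside, which mixes up the family of ground-model sets with a family of $\aleph_1$ fresh Cohen reals. The clean version is that, for a fixed infinite $S$, the family of sets $B$ bisected by $S$ is meager, so $\mathfrak{r}_{\nicefrac{1}{2}}\le\mathrm{non}(\mathcal{M})=\aleph_1<\mathfrak{c}$ in that model. Since you defer to the citation for this part, it does not affect the proof.
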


We remark that it is an open question whether it is consistent that $\mathrm{cov}(\mathcal{N})<\mathfrak{r}_{\nicefrac{1}{2}}$, see \cite[Question C]{MR4959539}. 

In the opposite direction, thanks to  Proposition \ref{prop:continuum_indep} there exists a maximal $\mathsf{d}$-independent family $\mathcal{A}$ of cardinality $\mathfrak{c}$ (more precisely, 
containing a subfamily of cardinality $\mathfrak{c}$ with a prescribed image in $(0,1)$). In particular: 
\begin{cor}\label{cor:maximalimage}
    There exists $\mathcal{A} \in \mathscr{M}$ with $\mathsf{d}[\mathcal{A}]=(0,1)$. 
\end{cor}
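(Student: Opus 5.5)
Corollary \ref{cor:maximalimage} follows by combining Proposition \ref{prop:continuum_indep} with Zorn's lemma. Fix any enumeration $(p_\alpha:\alpha<\mathfrak{c})$ whose range is all of $(0,1)$. This is possible because $|(0,1)|=\mathfrak{c}$; for instance, take a bijection $\mathfrak{c}\to(0,1)$. Proposition \ref{prop:continuum_indep} then supplies a $\mathsf{d}$-independent family $\mathcal{A}_0=\{A_\alpha:\alpha<\mathfrak{c}\}$ with $\mathsf{d}(A_\alpha)=p_\alpha$. Consequently $\mathsf{d}[\mathcal{A}_0]=(0,1)$.

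Next we extend $\mathcal{A}_0$ to a maximal family. Let $\mathbb{P}$ be the set of all $\mathsf{d}$-independent families containing $\mathcal{A}_0$, ordered by inclusion. It is nonempty, since $\mathcal{A}_0\in\mathbb{P}$. By the observation preceding the definition of $\mathscr{M}$, the union of a chain of $\mathsf{d}$-independent families is again $\mathsf{d}$-independent. Let us briefly justify this. Each of \ref{defi:1indep} and \ref{defi:3indep} concerns a single member or a finite subfamily, and any finite subfamily of the union already lies in one member of the chain. The same holds for \ref{defi:2indep}: every element of $\langle\bigcup_\alpha\mathcal{A}_\alpha\rangle$ is generated by finitely many sets, so it lies in $\langle\mathcal{A}_\alpha\rangle\subseteq\mathcal{D}$ for some $\alpha$. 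Such a union also contains $\mathcal{A}_0$, so every chain in $\mathbb{P}$ has an upper bound in $\mathbb{P}$. Zorn's lemma then gives a maximal element $\mathcal{A}$ of $\mathbb{P}$.

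The only point needing care is that $\mathcal{A}$ is maximal among \emph{all} $\mathsf{d}$-independent families, not merely among those containing $\mathcal{A}_0$. Suppose $\mathcal{A}\subseteq\mathcal{B}$ with $\mathcal{B}$ $\mathsf{d}$-independent. Then $\mathcal{B}\supseteq\mathcal{A}_0$, so $\mathcal{B}\in\mathbb{P}$, and maximality in $\mathbb{P}$ forces $\mathcal{B}=\mathcal{A}$. Hence $\mathcal{A}\in\mathscr{M}$.

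Finally, \ref{defi:1indep} gives $\mathsf{d}[\mathcal{A}]\subseteq(0,1)$. Conversely, $(0,1)=\mathsf{d}[\mathcal{A}_0]\subseteq\mathsf{d}[\mathcal{A}]$. Therefore $\mathsf{d}[\mathcal{A}]=(0,1)$. There is no real obstacle: all the work is contained in Proposition \ref{prop:continuum_indep}, and the rest is a routine Zorn argument.
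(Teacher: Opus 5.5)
Your proposal is correct and follows essentially the paper's argument. The paper applies Proposition \ref{prop:continuum_indep} with $(p_\alpha:\alpha<\mathfrak{c})$ enumerating $(0,1)$ and then extends the resulting family to a maximal one by Zorn's lemma; your explicit check that the union of a chain is $\mathsf{d}$-independent, and that maximality in $\mathbb{P}$ gives maximality in $\mathscr{M}$, just spells out what the paper leaves implicit.
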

One might conjecture that the conclusion of Corollary \ref{cor:maximalimage} holds for every maximal $\mathsf{d}$-independent family $\mathcal{A}$. 
In the following result, we show that the answer is negative by proving that  \textquotedblleft most\textquotedblright\, of the image $\mathsf{d}[\mathcal{A}]$ can be prescribed to a given nonempty subset of $(0,1)$. Hereafter, given a subset $A\subseteq \mathbb{R}$, we denote by $A^\prime$ the set of its accumulation points. 
\begin{thm}\label{thm:jon}
     Pick a nonempty $S\subseteq (0,1)$. Then there exists 
    a maximal $\mathsf{d}$-independent family $\mathcal{A} \in \mathscr{M}$ with cardinality $\mathfrak{c}$ and an at most countable subfamily $\mathcal{B}\subseteq \mathcal{A}$ such that 
    $$
    \mathsf{d}[\mathcal{A}\setminus \mathcal{B}]=S 
    \qquad \text{ and }\qquad 
    \mathsf{d}[\mathcal{B}]^\prime \subseteq \{0,1\}.
    $$
\end{thm}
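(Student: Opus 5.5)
We will run a transfinite recursion of length $\mathfrak{c}$ that kills every potential extension. At the same time we keep almost all generators at densities in $S$, and we use a countable "junk" family $\mathcal{B}$ with densities tending to $0$ to absorb the sets that cannot be handled directly.

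\emph{Setup.} Fix $s\in S$. Enumerate $\mathcal{D}\cap\{X:\mathsf{d}(X)\in(0,1)\}$ as $(X_\alpha:\alpha<\mathfrak{c})$, and also enumerate $S$ with repetitions as $(s_\alpha:\alpha<\mathfrak{c})$. First build a countable $\mathsf{d}$-independent family $\mathcal{B}=\{B_n:n\in\omega\}$ with $\mathsf{d}(B_n)=2^{-n-1}$, so that $\mathsf{d}[\mathcal{B}]'\subseteq\{0\}$. This can be done by a block construction as in Remark \ref{rmk:relationshipindependent}, or by Proposition \ref{prop:continuum_indep}.

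The point of $\mathcal{B}$ is that every atom over $\mathcal{B}$ is cut very thin. Call a $\mathsf{d}$-independent family \emph{small-density-rich} if it contains $\mathcal{B}$. We aim to build an increasing chain $\mathcal{A}_\alpha\supseteq\mathcal{B}$ with $|\mathcal{A}_\alpha\setminus\mathcal{B}|\le|\alpha|+\aleph_0<\mathfrak{c}$, all new members having density in $S$. The key extension step will be: given $\mathcal{A}_\alpha$ and $X_\alpha$, either $\mathcal{A}_\alpha\cup\{X_\alpha\}$ is not $\mathsf{d}$-independent (then we add only a set of density $s_\alpha$, to guarantee that the image is all of $S$), or we add a set $C$ with $\mathsf{d}(C)=s_\alpha$ that makes $\mathcal{A}_\alpha\cup\{C,X_\alpha\}$ non-independent.

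\emph{Key lemma (main obstacle).} Let $\mathcal{A}$ be $\mathsf{d}$-independent with $|\mathcal{A}|<\mathfrak{c}$, $\mathcal{B}\subseteq\mathcal{A}$, and let $X\in\mathcal{D}$ be such that $\mathcal{A}\cup\{X\}$ is $\mathsf{d}$-independent. We need $C\in\mathcal{D}$ with $\mathsf{d}(C)=p\in(0,1)$ such that $\mathcal{A}\cup\{C\}$ is $\mathsf{d}$-independent but $\mathsf{d}(C\cap X)\ne p\,\mathsf{d}(X)$, or such that $C\cap X\notin\mathcal{D}$.

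Constructing a $C$ independent of $\mathcal{A}$ but correlated with $X$ is hard when $|\mathcal{A}|$ is large, since independence of $C$ from $<\mathfrak{c}$ many sets must be checked. This is where the countable $\mathcal{B}$ matters, together with Proposition \ref{prop:continuum_indep}-style constructions. The first thing I would try is to build all of the new sets in advance as a family $\{C_\alpha\}$ that is independent over $\mathcal{B}$ via a product-measure construction (as in Remark \ref{rmk:relationshipindependent}, using parity functions of an independent family). The diagonalization would then be arranged on atoms of $\mathcal{B}$ by choosing $C_\alpha$ correlated with $X_\alpha$ while keeping independence from the earlier $C_\beta$.

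If this cannot be done uniformly, I would instead use a genericity argument. Take $C=(X\cap Y)\cup(X^0\cap Y')$ with $Y,Y'$ chosen independent of $\langle\mathcal{A}\cup\{X\}\rangle$ and having different densities $q\ne q'$, mixed so that $\mathsf{d}(C)=p$. Then $C$ is independent of $\mathcal{A}$ because $X$ is, but $\mathsf{d}(C\cap X)=q\,\mathsf{d}(X)\ne p\,\mathsf{d}(X)$. The needed $Y,Y'$ exist because $|\mathcal{A}|<\mathfrak{c}$: one can extend by a set of any prescribed density, and I expect the argument behind Corollary \ref{cor:extension1} to prove exactly this. Ordering $p=q\,\mathsf{d}(X)+q'(1-\mathsf{d}(X))$ with $q\ne p$ is always possible in $(0,1)$.

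\emph{Conclusion.} At limits take unions. At the end, $\mathcal{A}=\bigcup_\alpha\mathcal{A}_\alpha$ has cardinality $\mathfrak{c}$, $\mathsf{d}[\mathcal{A}\setminus\mathcal{B}]=S$, and $\mathsf{d}[\mathcal{B}]'\subseteq\{0,1\}$. For maximality, suppose $X\notin\mathcal{A}$ with $\mathcal{A}\cup\{X\}$ independent. Then $X=X_\alpha$ for some $\alpha$, and it was already killed at stage $\alpha$ by the witness $C$, since non-independence is preserved upward. The worry is the case where $\mathsf{d}$-independence of the extension requires $|\mathcal{A}_\alpha|<\mathfrak{c}$, i.e.\ uses $\mathfrak{i}_{\mathsf{d}}$. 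If $\mathfrak{i}_{\mathsf{d}}<\mathfrak{c}$ the recursion might stall, and that is precisely where the countable exceptional $\mathcal{B}$ (chosen "generic" enough to make extensions possible) has to carry the argument. I expect this to be the genuine difficulty.
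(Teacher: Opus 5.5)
\textbf{Gap: the extension step is not available in ZFC.} Your recursion depends entirely on the key lemma: at every stage $\alpha<\mathfrak{c}$ you must extend a family $\mathcal{A}_\alpha$ of size $|\alpha|+\aleph_0$ by a set of prescribed density $s_\alpha$, in the killing case one correlated with $X_\alpha$. That is exactly Proposition \ref{prop:notextension2}\ref{item2extension} and Corollary \ref{cor:extension1}\ref{item2extensioncc}. The paper proves these only under $\mathrm{cov}(\mathcal{N})=\mathfrak{c}$, because a $\nu$-random set has to lie outside $|\mathcal{A}_\alpha|$ many null sets.

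Your $Y,Y'$ construction $C=(X\cap Y)\cup(X^0\cap Y')$ is algebraically correct, but it only moves the problem. You still need $Y,Y'$ of prescribed densities that are independent over $\langle\mathcal{A}_\alpha\cup\{X_\alpha\}\rangle$, which is the same extension problem for an uncountable family. Since $\mathrm{cov}(\mathcal{N})\le\mathfrak{r}_{\nicefrac12}<\mathfrak{c}$ is consistent (Corollary \ref{cor:ordercardinals}), nothing in ZFC lets the recursion pass its uncountable stages. When it does run, it proves Theorem \ref{thm:mainprescribedimage} with $\mathcal{B}=\emptyset$ under the extra hypothesis, not the ZFC statement.

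The pre-chosen $\mathcal{B}$ never enters the construction, and it cannot do what you want. With $\mathsf{d}(B_n)=2^{-n-1}$, the atom $\bigcap_{i<m}B_i^0$ has density $\prod_{i<m}(1-2^{-i-1})$, which stays above a positive constant. So the atoms over $\mathcal{B}$ are \emph{not} thin; compare the second half of Remark \ref{rmk:dense}.

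\textbf{Missing idea: let $\mathcal{B}$ emerge from a Zorn completion.} Do not fix $\mathcal{B}$ in advance, and do not run a recursion.
\begin{enumerate}
\item Take $\mathcal{A}^\ast$ from Proposition \ref{prop:continuum_indep} with $|\mathcal{A}^\ast|=\mathfrak{c}$ and $\mathsf{d}[\mathcal{A}^\ast]=S$.
\item By Zorn, let $\mathcal{A}_0$ be maximal among $\mathsf{d}$-independent families containing $\mathcal{A}^\ast$ whose image is exactly $S$.
\item Extend $\mathcal{A}_0$ to any maximal $\mathcal{A}$, and set $\mathcal{B}:=\mathcal{A}\setminus\mathcal{A}_0$.
\end{enumerate}
Thinness is then used in the opposite direction from yours. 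Suppose infinitely many distinct $B_k\in\mathcal{B}$ had densities in $[\nicefrac1n,1-\nicefrac1n]$. Then the atoms $\bigcap_{i<m}B_i^{\sigma(i)}$ have density at most $(1-\nicefrac1n)^m$. Crucially, they are automatically independent of $\langle\mathcal{A}_0\rangle$, because $\mathcal{A}$ is $\mathsf{d}$-independent.

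Glue such atoms inside $B_0$ and inside $B_0^0$, and pass to a limit in the complete pseudometric $\rho(X,Y)=\mathsf{d}^\star(X\bigtriangleup Y)$, for which $\mathcal{D}$ is closed. This yields $X\in\mathcal{D}$ with the following properties:
\begin{enumerate}
\item $\mathsf{d}(X)=s\in S$;
\item $X$ is $\mathsf{d}$-independent of $\mathcal{A}_0$;
\item $\mathsf{d}(X\cap B_0)\neq s\,\mathsf{d}(B_0)$.
\end{enumerate}
The last property forces $X\notin\mathcal{A}_0$, so $\mathcal{A}_0\cup\{X\}$ contradicts the maximality of $\mathcal{A}_0$. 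Hence each such $\mathcal{B}_n$ is finite, which gives both the countability of $\mathcal{B}$ and $\mathsf{d}[\mathcal{B}]'\subseteq\{0,1\}$.

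Independence from the large family $\mathcal{A}_0$ comes for free from the ambient maximal family. So no genericity over $<\mathfrak{c}$ many constraints is needed, and that is what makes the argument work in ZFC.
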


At this point, one might ask whether it is possible to choose $\mathcal{B}=\emptyset$. 
To this aim, we will need the following extension result. 
Let us recall $\mathrm{cov}(\mathcal{N})=\mathfrak{c}$ is implied by Martin's axiom $\mathsf{MA}$, or also by the Continuum Hypothesis $\mathsf{CH}$. 
\begin{prop}\label{prop:notextension2}
    Pick $\mathcal{A}\subseteq \mathcal{D}$ and $A \in \mathcal{D}\setminus \mathcal{A}$ such that $\mathcal{A}\cup \{A\}$ is $\mathsf{d}$-independent. Fix also $s \in (0,1)$. Then the following hold\textup{.}
    \begin{enumerate}[label={\rm (\roman*)}]
    \item \label{item1extension} If $|\mathcal{A}|\le \omega$ then there exists $B \in \mathcal{D}$ such that $\mathsf{d}(B)=s$, $\mathcal{A}\cup \{B\}$ is $\mathsf{d}$-independent, and $\{A,B\}$ is not $\mathsf{d}$-independent\textup{;}
    \item \label{item2extension} Assume $\mathrm{cov}(\mathcal{N})=\mathfrak{c}$. If $|\mathcal{A}|<\mathfrak{c}$  there exists $B \in \mathcal{D}$ such that $\mathsf{d}(B)=s$, $\mathcal{A}\cup \{B\}$ is $\mathsf{d}$-independent, and $\{A,B\}$ is not $\mathsf{d}$-independent\textup{.}
    \end{enumerate}  
\end{prop}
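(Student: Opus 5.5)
Both items follow one plan. Build $B$ as a random subset of $\omega$ whose conditional law depends on $A$; then show that, almost surely, $B$ is $\mathsf{d}$-independent from every finite subfamily of $\mathcal{A}$, while by design $\mathsf{d}(A\cap B)\neq \mathsf{d}(A)\,s$.

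\emph{Step 1: the random set.} Put $a:=\mathsf{d}(A)\in(0,1)$. Choose $q_1,q_0\in(0,1)$ with $q_1\neq q_0$ and
$$
a q_1+(1-a)q_0=s,
$$
which is possible for any $s\in(0,1)$. Let $(\xi_n:n\in\omega)$ be independent Bernoulli variables with $P(\xi_n=1)=q_1$ if $n\in A$ and $P(\xi_n=1)=q_0$ if $n\notin A$, and set $B:=\{n:\xi_n=1\}$. Equivalently, $B=\{n: x_n<q_{\bm 1_A(n)}\}$ for a sequence $x=(x_n)$ of i.i.d.\ uniform variables. Via binary expansions this is a measurable image of $\lambda$ on $2^\omega$, so null sets transfer back to $\mathcal{N}$.

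\emph{Step 2: independence from a fixed finite subfamily.} Fix a finite $\mathcal{F}\subseteq\mathcal{A}$ and an atom $C=\bigcap_{F\in\mathcal{F}}F^{\sigma(F)}$. Since $\mathcal{A}\cup\{A\}$ is $\mathsf{d}$-independent, $C\cap A$ and $C\cap A^0$ lie in $\mathcal{D}$, with densities $\mathsf{d}(C)\,a$ and $\mathsf{d}(C)(1-a)$. By the strong law of large numbers applied along these sets (Kolmogorov's SLLN for bounded independent variables, normalised by $n$), almost surely
$$
\mathsf{d}(C\cap B)=\mathsf{d}(C)\bigl(aq_1+(1-a)q_0\bigr)=\mathsf{d}(C)\,s .
$$
Every element of $\langle\mathcal{A}\cup\{B\}\rangle$ is a finite disjoint union of sets of the form $C\cap B^{i}$. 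Hence, on a full-measure event $E_{\mathcal{F}}$, all such sets have density and \eqref{eq:d3sharp} holds for $\mathcal{F}\cup\{B\}$. The same SLLN gives almost surely $\mathsf{d}(B)=s$ and $\mathsf{d}(A\cap B)=aq_1\neq as$, so $\{A,B\}$ is not $\mathsf{d}$-independent. We also need $B\notin\mathcal{A}$; this is automatic, because $B\in\mathcal{A}$ would make $\{A,B\}\subseteq\mathcal{A}\cup\{A\}$ $\mathsf{d}$-independent.

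\emph{Step 3: intersecting the events.} For item~(i), $\mathcal{A}$ is countable, so there are only countably many finite $\mathcal{F}$. Intersecting the countably many probability-one events $E_{\mathcal{F}}$, together with the events from Step~2, leaves a probability-one event, and any sample point in it gives the desired $B$. For item~(ii), the finite subfamilies of $\mathcal{A}$ number $|\mathcal{A}|<\mathfrak{c}$. The complements of the $E_{\mathcal{F}}$ are null sets, transferred to null subsets of $2^\omega$. Since $\mathrm{cov}(\mathcal{N})=\mathfrak{c}$, fewer than $\mathfrak{c}$ null sets cannot cover $2^\omega$. So some point avoids all of them and also the countably many null sets coming from the $A$-conditions, and that point gives $B$.

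The main obstacle I expect is the measure-theoretic bookkeeping. The exceptional sets must be genuine $\lambda$-null subsets of $2^\omega$ (Borel, via the coding of uniform variables by bits), so that $\mathrm{cov}(\mathcal{N})$ applies. One also has to check that the SLLN is used along sets of positive density with the correct normalisation $|C\cap B\cap n|/n$. This works because $C$ has density: the count $|C\cap n|$ grows linearly, so the average of the $\xi_k$ over $k\in C\cap n$ converges almost surely, and multiplying by $|C\cap n|/n\to\mathsf{d}(C)$ gives the limit.
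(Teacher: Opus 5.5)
Your proposal is correct and follows the paper's proof essentially step by step. It uses the same biased product measure, with coordinate probabilities $q_1$ on $A$ and $q_0$ off $A$ (the paper's $t_1,t_0$), Kolmogorov's SLLN along each atom of $\mathcal{A}$ and along $A$ and $\omega$, and then an intersection of countably many (respectively fewer than $\mathfrak{c}$) full-measure sets. The one divergence is how null sets are transferred. The paper invokes the isomorphism theorem for atomless, countably separated, compact probability spaces to get $\mathrm{cov}(\mathcal{N}_{\overline{\nu}})=\mathrm{cov}(\mathcal{N})$. Your explicit Borel coding $\Phi$ with $\Phi_*\lambda=\nu$ instead pulls the $\nu$-null exceptional sets back to $\lambda$-null ones, which is more elementary and uses only the inequality actually needed, $\mathrm{cov}(\mathcal{N})\le\mathrm{cov}(\mathcal{N}_{\nu})$.
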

As we highlight in Remark \ref{rmk:perfectset} below, 
%denoting with $\mathcal{Z}$ the family of asymptotic density
it turns out that 
it is possible to choose a perfect subset $\mathcal{P}\subseteq 2^\omega$ of sets $B \in \mathcal{D}$ satisfying the claim of Proposition \ref{prop:notextension2}\ref{item1extension} such that, in addition, the sets in $\mathcal{P}$ are pairwise distinct modulo asymptotic density zero modifications. 

Taking into account that the least size of a maximal $\mathsf{d}$-independent family $\mathcal{A} \in \mathscr{M}$ is $\ge \mathrm{cov}(\mathcal{N})$ by Corollary \ref{cor:ordercardinals}, we obtain: 
\begin{cor}\label{cor:extension1}
    Let $\mathcal{A}$ be a $\mathsf{d}$-independent family and pick $s \in (0,1)$. Then the following hold\textup{.}
    \begin{enumerate}[label={\rm (\roman*)}]
    \item \label{item1extensioncc} If $|\mathcal{A}|\le \omega$ then there exists $B \in \mathcal{D}$ such that $\mathsf{d}(B)=s$ and $\mathcal{A}\cup \{B\}$ is $\mathsf{d}$-independent\textup{;}
    \item \label{item2extensioncc} Assume $\mathrm{cov}(\mathcal{N})=\mathfrak{c}$. If $|\mathcal{A}|<\mathfrak{c}$  there exists $B \in \mathcal{D}$ such that $\mathsf{d}(B)=s$ and $\mathcal{A}\cup \{B\}$ is $\mathsf{d}$-independent\textup{.}
    \end{enumerate}  
\end{cor}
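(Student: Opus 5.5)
We derive Corollary \ref{cor:extension1} from Proposition \ref{prop:notextension2}, Remark \ref{rmk:infinite} and Corollary \ref{cor:ordercardinals}. The key move is to reduce to the situation of Proposition \ref{prop:notextension2}: we need an auxiliary set $A\in\mathcal{D}\setminus\mathcal{A}$ such that $\mathcal{A}\cup\{A\}$ is $\mathsf{d}$-independent. Once we have it, Proposition \ref{prop:notextension2} supplies $B$ with $\mathsf{d}(B)=s$ and $\mathcal{A}\cup\{B\}$ $\mathsf{d}$-independent. The extra clause, that $\{A,B\}$ is not $\mathsf{d}$-independent, is simply discarded.

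For item \ref{item1extensioncc}, let $\mathcal{A}$ be countable and $\mathsf{d}$-independent. We must show that $\mathcal{A}$ is not maximal. By Corollary \ref{cor:ordercardinals}, every maximal $\mathsf{d}$-independent family has cardinality at least $\mathfrak{i}_{\mathsf{d}}\ge \mathrm{cov}(\mathcal{N})\ge\aleph_1$, so a countable $\mathcal{A}$ cannot be maximal. Hence there is $A\in\mathcal{D}\setminus\mathcal{A}$ with $\mathcal{A}\cup\{A\}$ $\mathsf{d}$-independent. Applying Proposition \ref{prop:notextension2}\ref{item1extension} to $\mathcal{A}$, $A$ and $s$ then gives the required $B$. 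For finite $\mathcal{A}$, Remark \ref{rmk:infinite} alone already yields such an $A$.

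For item \ref{item2extensioncc}, assume $\mathrm{cov}(\mathcal{N})=\mathfrak{c}$ and $|\mathcal{A}|<\mathfrak{c}$. Corollary \ref{cor:ordercardinals} gives $\mathfrak{c}=\mathrm{cov}(\mathcal{N})\le\mathfrak{i}_{\mathsf{d}}\le\mathfrak{c}$, so $\mathfrak{i}_{\mathsf{d}}=\mathfrak{c}$, and therefore $\mathcal{A}$ is not maximal. We choose a witness $A$ as before and apply Proposition \ref{prop:notextension2}\ref{item2extension}.

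The only point needing care is that the proposition requires $A\notin\mathcal{A}$ together with $\mathsf{d}$-independence of $\mathcal{A}\cup\{A\}$. This is exactly what failure of maximality provides, since maximality is defined with respect to proper $\mathsf{d}$-independent extensions. Beyond this, the argument is bookkeeping, and I expect no real obstacle.
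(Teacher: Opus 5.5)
Your proposal is correct and follows the paper's own argument. Non-maximality of $\mathcal{A}$ comes from the bound $\mathrm{cov}(\mathcal{N})\le\mathfrak{i}_{\mathsf{d}}$ in Corollary~\ref{cor:ordercardinals}, and then Proposition~\ref{prop:notextension2}, applied to a witness $A\notin\mathcal{A}$, produces $B$; the clause about $\{A,B\}$ is simply dropped, and your explicit check that $A\notin\mathcal{A}$ spells out what the paper leaves implicit.
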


Using recursively the above results, 
%our last result proves 
we show 
the existence of a maximal $\mathsf{d}$-independent family with a prescribed set of values in $(0,1)$. 
\begin{thm}\label{thm:mainprescribedimage}
    Assume $\mathrm{cov}(\mathcal{N})=\mathfrak{c}$. 
    Pick a nonempty $S\subseteq (0,1)$. Then there exists 
    a maximal $\mathsf{d}$-independent family $\mathcal{A} \in \mathscr{M}$ such that $\mathsf{d}[\mathcal{A}]=S$.
\end{thm}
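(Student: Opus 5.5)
We prove Theorem \ref{thm:mainprescribedimage} by a transfinite recursion of length $\mathfrak{c}$ that builds $\mathcal{A}$ while \emph{killing} every candidate extension. Enumerate $\mathcal{D}=\{D_\alpha:\alpha<\mathfrak{c}\}$ and fix a function $s:\mathfrak{c}\to S$ (for instance a constant $s_0\in S$, or an enumeration covering $S$ cofinally often so that every value of $S$ is attained). We build an increasing chain $(\mathcal{A}_\alpha:\alpha<\mathfrak{c})$ of $\mathsf{d}$-independent families with $|\mathcal{A}_\alpha|\le|\alpha|+\omega<\mathfrak{c}$ and $\mathsf{d}[\mathcal{A}_\alpha]\subseteq S$. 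Unions at limit stages stay $\mathsf{d}$-independent, as observed before the definition of $\mathscr{M}$. At the end we set $\mathcal{A}:=\bigcup_\alpha\mathcal{A}_\alpha$, so $\mathsf{d}[\mathcal{A}]\subseteq S$.

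\emph{Start and surjectivity.} We take $\mathcal{A}_0$ to be a single set of density $s_0\in S$; such a set exists, e.g.\ a Beatty-type set. Then, at successor stages, we interleave steps that add a new set of prescribed density $t\in S$. By Corollary \ref{cor:extension1}\ref{item2extensioncc}, since $|\mathcal{A}_\alpha|<\mathfrak{c}$ under $\mathrm{cov}(\mathcal{N})=\mathfrak{c}$, we can add $B$ with $\mathsf{d}(B)=t$. Enumerating $S$ along the recursion (with $|S|\le\mathfrak{c}$) yields $S\subseteq\mathsf{d}[\mathcal{A}]$.

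\emph{Killing step.} At stage $\alpha$ we look at $D_\alpha$. If $\mathcal{A}_\alpha\cup\{D_\alpha\}$ is not $\mathsf{d}$-independent (or $D_\alpha\in\mathcal{A}_\alpha$), we do nothing. If it is $\mathsf{d}$-independent and $\mathsf{d}(D_\alpha)\in S$, we could simply add $D_\alpha$. The problematic case is $\mathsf{d}(D_\alpha)\notin S$. Then we apply Proposition \ref{prop:notextension2}\ref{item2extension} with $\mathcal{A}=\mathcal{A}_\alpha$, $A=D_\alpha$, and some $s\in S$. This gives $B$ with $\mathsf{d}(B)=s\in S$ such that $\mathcal{A}_\alpha\cup\{B\}$ is $\mathsf{d}$-independent but $\{D_\alpha,B\}$ is not. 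We set $\mathcal{A}_{\alpha+1}:=\mathcal{A}_\alpha\cup\{B\}$. Uniformly, we may apply this killing step whenever $\mathcal{A}_\alpha\cup\{D_\alpha\}$ is $\mathsf{d}$-independent, regardless of $\mathsf{d}(D_\alpha)$.

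\emph{Maximality.} Suppose $D\in\mathcal{D}\setminus\mathcal{A}$ with $\mathcal{A}\cup\{D\}$ $\mathsf{d}$-independent, and let $D=D_\alpha$. Then $\mathcal{A}_\alpha\cup\{D_\alpha\}$ was $\mathsf{d}$-independent, so some $B\in\mathcal{A}_{\alpha+1}\subseteq\mathcal{A}$ was added with $\{D_\alpha,B\}$ not $\mathsf{d}$-independent. This contradicts the fact that a subfamily of a $\mathsf{d}$-independent family is $\mathsf{d}$-independent; the subfamily $\{D,B\}$ trivially inherits (\textsc{d}1)--(\textsc{d}3), since $\langle\{D,B\}\rangle\subseteq\langle\mathcal{A}\cup\{D\}\rangle$. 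Hence $\mathcal{A}\in\mathscr{M}$ and $\mathsf{d}[\mathcal{A}]=S$.

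The cardinality bookkeeping is routine: each stage adds at most two sets, so $|\mathcal{A}_\alpha|<\mathfrak{c}$ for all $\alpha<\mathfrak{c}$, which keeps Proposition \ref{prop:notextension2}\ref{item2extension} and Corollary \ref{cor:extension1}\ref{item2extensioncc} applicable throughout. The only real obstacle is the killing step, which must produce a witness whose density lies in $S$ rather than equal to $\mathsf{d}(D_\alpha)$. That is exactly what Proposition \ref{prop:notextension2}\ref{item2extension} supplies, with the free parameter $s$ chosen in $S$. The hypothesis $\mathrm{cov}(\mathcal{N})=\mathfrak{c}$ is needed only so that this proposition applies at every stage below $\mathfrak{c}$.
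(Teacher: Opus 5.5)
Your proposal is correct and follows essentially the same route as the paper. It runs a length-$\mathfrak{c}$ recursion through $\mathcal{D}$, uses Proposition \ref{prop:notextension2}\ref{item2extension} with the free density chosen in $S$ to kill every $D_\alpha$ that could still be adjoined, and relies on Corollary \ref{cor:extension1}\ref{item2extensioncc} to realize each value of $S$. The only differences are cosmetic: the paper adds exactly one set of density $p_\alpha$ per stage, merging your interleaved surjectivity step into the killing step, and your nonempty starting family $\mathcal{A}_0$ avoids applying the extension results to the empty family.
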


The following consequence is immediate: 
\begin{cor}\label{cor:existenceonehalfmaximal}
    Assume $\mathrm{cov}(\mathcal{N})=\mathfrak{c}$. 
    Then there exists a maximal $\mathsf{d}$-independent family $\mathcal{A} \in \mathscr{M}$ such that $\mathsf{d}(A)=\nicefrac{1}{2}$ for all $A \in \mathcal{A}$. 
\end{cor}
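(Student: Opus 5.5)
The statement is the special case $S=\{\nicefrac{1}{2}\}$ of Theorem \ref{thm:mainprescribedimage}, so the plan is to invoke that theorem directly. Since $\mathrm{cov}(\mathcal{N})=\mathfrak{c}$ is assumed and $S=\{\nicefrac{1}{2}\}$ is a nonempty subset of $(0,1)$, Theorem \ref{thm:mainprescribedimage} gives $\mathcal{A}\in\mathscr{M}$ with $\mathsf{d}[\mathcal{A}]=\{\nicefrac{1}{2}\}$. This means exactly that $\mathsf{d}(A)=\nicefrac{1}{2}$ for every $A\in\mathcal{A}$, and nothing further needs to be checked. The only point to note is that the image cannot be empty, because $\mathcal{A}$ is nonempty by definition (indeed infinite, by Remark \ref{rmk:infinite}).

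To make the corollary less dependent on the black box, I would also record how the recursion behind Theorem \ref{thm:mainprescribedimage} runs in this case. Fix an enumeration $(X_\alpha:\alpha<\mathfrak{c})$ of $\mathcal{D}$ and start from any $\mathsf{d}$-independent family of sets of density $\nicefrac{1}{2}$ (for instance from Remark \ref{rmk:relationshipindependent}). Build an increasing chain $(\mathcal{A}_\alpha:\alpha<\mathfrak{c})$ of such families with $|\mathcal{A}_\alpha|<\mathfrak{c}$, taking unions at limit stages. At stage $\alpha$ there are three cases.
\begin{enumerate}[label={\rm (\alph*)}]
\item If $\mathcal{A}_\alpha\cup\{X_\alpha\}$ is not $\mathsf{d}$-independent, set $\mathcal{A}_{\alpha+1}:=\mathcal{A}_\alpha$; then $X_\alpha$ can never be added later.
\item If it is $\mathsf{d}$-independent and $\mathsf{d}(X_\alpha)=\nicefrac{1}{2}$, add $X_\alpha$.
\item Otherwise, apply Proposition \ref{prop:notextension2}\ref{item2extension} with $s=\nicefrac{1}{2}$ to obtain $B$ with $\mathsf{d}(B)=\nicefrac{1}{2}$, $\mathcal{A}_\alpha\cup\{B\}$ $\mathsf{d}$-independent, and $\{X_\alpha,B\}$ not $\mathsf{d}$-independent. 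Add $B$, which permanently blocks $X_\alpha$.
\end{enumerate}
The union $\bigcup_\alpha\mathcal{A}_\alpha$ is then $\mathsf{d}$-independent, all its members have density $\nicefrac{1}{2}$, and it is maximal, since every $X\in\mathcal{D}$ was handled at its stage.

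The main obstacle, already handled in the proof of Theorem \ref{thm:mainprescribedimage}, is case (c): it needs the hypothesis $|\mathcal{A}_\alpha|<\mathfrak{c}$, which is exactly where $\mathrm{cov}(\mathcal{N})=\mathfrak{c}$ enters through Proposition \ref{prop:notextension2}\ref{item2extension}. For the corollary itself, the one-line specialization of Theorem \ref{thm:mainprescribedimage} is enough.
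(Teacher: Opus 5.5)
Your proposal is correct and matches the paper, which states the corollary as an immediate consequence of Theorem~\ref{thm:mainprescribedimage} with $S=\{\nicefrac{1}{2}\}$. Your supplementary recursion is also sound, provided the starting family has size $<\mathfrak{c}$ (for example $\{2\cdot\omega\}$); it streamlines the paper's recursion, because for a one-point $S$ you need not add a new set at every stage and so can dispense with Corollary~\ref{cor:extension1}.
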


Now, observe that if $\mathcal{A}$ is a maximal $\mathsf{d}$-independent family then $\mathcal{A}_{\mathcal{S}}:=\{A^0: A \in \mathcal{S}\}\cup \{A^1: A \in \mathcal{A}\setminus \mathcal{S}\}$ is maximal $\mathsf{d}$-independent as well for each $\mathcal{S}\subseteq \mathcal{A}$. However, $\langle \mathcal{A}\rangle=\langle \mathcal{A}_{\mathcal{S}}\rangle$, and hence $\mathsf{d}[\langle \mathcal{A}\rangle]=\mathsf{d}[\langle \mathcal{A}_{\mathcal{S}}\rangle]$, for each $\mathcal{S}\subseteq \mathcal{A}$. 
In the next result, we show that $\mathsf{d}[\langle \mathcal{A}\rangle]$ may attain many subsets of $[0,1]$. % as $\mathcal{A}\in \mathscr{M}$. 
\begin{thm}\label{thm:imagefieldsfidtsinct}
There exist $2^{\mathfrak{c}}$ maximal $\mathsf d$-independent families 
$ 
    \{\mathcal{A}_\alpha:\alpha <2^{\mathfrak c}\}\subseteq \mathscr M
$ 
such that $|\mathcal{A}_\alpha|=\mathfrak{c}$ for each $\alpha<2^{\mathfrak{c}}$ and the sets  
$$
\{\mathsf{d}[\langle \mathcal{A}_\alpha\rangle]: \alpha<2^\mathfrak{c}\}
$$
are pairwise distinct.   
\end{thm}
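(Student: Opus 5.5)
Fix a family $(S_X: X \in \mathcal{P}(\mathfrak{c}))$ of $2^{\mathfrak{c}}$ pairwise distinct subsets of $(0,1)$, each of cardinality $\mathfrak{c}$, all containing a common ``separating'' set; for instance, partition a transcendence-basis-like set $T\subseteq (0,1)$ of $\mathfrak{c}$ algebraically independent reals into $\mathfrak{c}$ many points $\{t_\xi:\xi<\mathfrak{c}\}$ and put $S_X:=\{t_\xi: \xi \in X\}\cup\{\nicefrac12\}$, with $X$ ranging over subsets of $\mathfrak{c}$ of size $\mathfrak{c}$. The plan is to build, for each such $X$, a maximal $\mathsf{d}$-independent family $\mathcal{A}_X$ of size $\mathfrak{c}$ whose field of densities remembers $X$.

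The construction uses Proposition \ref{prop:continuum_indep}: take a $\mathsf{d}$-independent family $\{A_\xi:\xi\in X\}$ with $\mathsf{d}(A_\xi)=t_\xi$, together with $\mathfrak{c}$ further sets of density $\nicefrac12$, and extend by Zorn's lemma to some $\mathcal{A}_X\in\mathscr{M}$. Then $|\mathcal{A}_X|=\mathfrak{c}$ and $t_\xi\in \mathsf{d}[\langle\mathcal{A}_X\rangle]$ for $\xi\in X$. The difficulty is the converse: the Zorn extension may add sets whose densities are uncontrolled, and these could produce some $t_\eta$ with $\eta\notin X$ inside $\mathsf{d}[\langle\mathcal{A}_X\rangle]$.

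To handle this, I would use the observation that $\mathsf{d}[\langle\mathcal{A}\rangle]$ lies in the subring generated by $\mathsf{d}[\mathcal{A}]$. Every element of $\langle\mathcal{A}\rangle$ is a finite disjoint union of atoms $\bigcap_{\alpha\in F}A_\alpha^{\sigma(\alpha)}$, and by \eqref{eq:d3sharp} its density is a polynomial with integer coefficients in finitely many values $\mathsf{d}(A_\alpha)$. Since the added sets number at most $\mathfrak{c}$, their densities generate a field $K$ of size at most $\mathfrak{c}$. Such a field can still contain all of $T$, so a cardinality argument alone does not suffice. Instead I would run a counting argument. Each $\mathcal{A}\in\mathscr{M}$ determines $E(\mathcal{A}):=\mathsf{d}[\langle\mathcal{A}\rangle]\cap T$. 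This set contains $\{t_\xi:\xi\in X\}$ when $\mathcal{A}=\mathcal{A}_X$, so it is not automatic that distinct $X$ yield distinct sets $E$.

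I would therefore choose the indices differently. Use an almost disjoint family $\{X_\alpha:\alpha<2^{\mathfrak{c}}\}$ of subsets of $\mathfrak{c}$, or better an independent family of subsets of $\mathfrak{c}$ of size $2^{\mathfrak{c}}$, which exists by Hausdorff. Then argue by a pigeonhole or transfinite selection: if only $\kappa<2^{\mathfrak{c}}$ distinct fields $\mathsf{d}[\langle\mathcal{A}_X\rangle]$ arose, then some single field $E\subseteq T$ would contain $\{t_\xi:\xi\in X\}$ for $2^{\mathfrak{c}}$ many $X$. That by itself is no contradiction, so the selection must be adaptive. Recursively along $\alpha<2^{\mathfrak{c}}$, pick $X_\alpha$ such that $\{t_\xi:\xi\in X_\alpha\}\not\subseteq \mathsf{d}[\langle\mathcal{A}_\beta\rangle]$ for every $\beta<\alpha$ with $\mathsf{d}[\langle\mathcal{A}_\beta\rangle]\neq$ previous ones. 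This fails when $\alpha\ge\mathfrak{c}^+$, since there is no room.

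Hence the real obstacle is controlling the Zorn extension itself. I expect the intended proof instead makes the densities of \emph{all} members of $\mathcal{A}_X$ lie in $S_X$. That is exactly Theorem \ref{thm:jon} with $S=S_X$, up to a countable $\mathcal{B}$ whose densities accumulate only at $0,1$. So the densities form $S_X\cup D$ with $D$ countable, and the field $\mathsf{d}[\langle\mathcal{A}_X\rangle]$ lies in the ring $R_X$ generated by $S_X\cup D$. Choose $D$ inside a fixed countable set $\mathbb{Q}$-algebraically independent from $T$, if Theorem \ref{thm:jon}'s proof allows prescribing $\mathcal{B}$'s densities, for instance rationals. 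Then, by algebraic independence, $t_\eta\in R_X$ forces $\eta\in X$. Since $t_\xi\in\mathsf{d}[\langle\mathcal{A}_X\rangle]$ exactly when $\xi\in X$, distinct $X$ give distinct fields, which yields $2^{\mathfrak{c}}$ families. The main step to verify is that the exceptional countable part from Theorem \ref{thm:jon} can be taken with densities inside a countable field, such as $\mathbb{Q}$ or $\mathbb{Q}(\nicefrac12)$, disjoint from the algebraic closure issues.
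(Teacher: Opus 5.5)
Your final strategy has the right skeleton: apply Theorem \ref{thm:jon} to sets of algebraically independent densities, and use that $\mathsf{d}[\langle\mathcal{A}\rangle]$ lies in the ring generated by $\mathsf{d}[\mathcal{A}]$. But it rests on the step you yourself flag and never establish, namely that the countable exceptional family $\mathcal{B}$ of Theorem \ref{thm:jon} can be taken with densities in $\mathbb{Q}$ (or in some fixed countable field). Nothing in Theorem \ref{thm:jon} or its proof gives this. There, $\mathcal{B}=\mathcal{A}\setminus\mathcal{A}_0$ for an \emph{arbitrary} Zorn extension $\mathcal{A}$ of $\mathcal{A}_0$ to a maximal family. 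The only information extracted is that $\mathcal{B}$ is countable with $\mathsf{d}[\mathcal{B}]'\subseteq\{0,1\}$. Controlling the densities of these added sets is the same kind of problem as achieving $\mathcal{B}=\emptyset$. The paper only manages that under $\mathrm{cov}(\mathcal{N})=\mathfrak{c}$ (Theorem \ref{thm:mainprescribedimage}), because prescribing densities during the construction needs Proposition \ref{prop:notextension2}\ref{item2extension}. Without this step, your claim that $t_\xi\in\mathsf{d}[\langle\mathcal{A}_X\rangle]$ exactly when $\xi\in X$ is unsupported. Moreover, your index sets $X$ are only required to be distinct. So a single set in $\mathcal{B}_X$ of density $t_\eta$ with $\eta\notin X$ already defeats the argument for the pair $X$, $X\cup\{\eta\}$.

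The missing idea is that you need no control over $\mathsf{d}[\mathcal{B}]$ beyond its countability. Take $T\subseteq(0,1)$ to be a transcendence basis of $\mathbb{R}$ over $\mathbb{Q}$, and apply Theorem \ref{thm:jon} to nonempty $S\subseteq T$. Each $x\in\mathsf{d}[\mathcal{B}_S]$ is algebraic over $\mathbb{Q}(F_x)$ for some finite $F_x\subseteq T$, so $U_S:=\bigcup_x F_x$ is countable. Your ring argument plus algebraic independence then gives
\[
S\subseteq T\cap\mathsf{d}[\langle\mathcal{A}_S\rangle]\subseteq S\cup U_S .
\]
So the trace on $T$ recovers $S$ up to a countable set. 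It therefore suffices to pick $2^{\mathfrak{c}}$ nonempty sets $S\subseteq T$ with pairwise \emph{uncountable} symmetric differences. One way is to take one set from each class of $\mathcal{P}(T)$ modulo countable symmetric difference; each class has size at most $|[T]^{\le\omega}|=\mathfrak{c}$, so there are $2^{\mathfrak{c}}$ classes. If $\mathsf{d}[\langle\mathcal{A}_S\rangle]=\mathsf{d}[\langle\mathcal{A}_R\rangle]$, then $S\bigtriangleup R\subseteq U_S\cup U_R$ is countable, a contradiction. This is exactly the paper's proof.
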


As it follows by Remark \ref{rmk:dense} below, it is worth noting that $\mathsf{d}[\langle \mathcal{A}\rangle]$ is dense in $[0,1]$ for all maximal $\mathsf{d}$-independent families $\mathcal{A}$. 
%As we observe in Remark \ref{rmk:dense} below, if $\mathcal{A}$ is an uncountable $\mathsf{d}$-independent family (which is the case if $\mathcal{A}$ is maximal by  Corollary \ref{cor:ordercardinals}), then $\mathsf{d}[\langle \mathcal{A}\rangle]$ is dense in $[0,1]$. 
%A remark is in order about the largeness of $\mathsf{d}[\langle \mathcal{A}\rangle]$:

As a consequence of Theorem \ref{thm:imagefieldsfidtsinct}, we obtain that there are $2^{\mathfrak{c}}$ maximal $\mathsf{d}$-independent families which generate pairwise distinct fields: 
\begin{cor}\label{cor:quantity2c}
$
|\{\mathsf{d}[\langle \mathcal{A}\rangle]: \mathcal{A} \in \mathscr{M}\}|=
|\{\langle \mathcal{A}\rangle: \mathcal{A} \in \mathscr{M}\}|=|\mathscr{M}|=2^\mathfrak{c}$.
\end{cor}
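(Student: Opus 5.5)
We will prove the three equalities in Corollary \ref{cor:quantity2c} by a chain of inequalities running from the smallest set to the largest, with both ends bounded by $2^{\mathfrak{c}}$.

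\emph{Upper bound.} Every $\mathcal{A}\in\mathscr{M}$ is a subset of $\mathcal{D}\subseteq\mathcal{P}(\omega)$, and $|\mathcal{P}(\omega)|=\mathfrak{c}$. Hence
$$
|\mathscr{M}|\le|\mathcal{P}(\mathcal{P}(\omega))|=2^{\mathfrak{c}}.
$$

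\emph{Surjections.} We next observe that the three sets are linked by surjections.
\begin{enumerate}[label={\rm (\alph*)}]
\item The map $\mathcal{A}\mapsto\langle\mathcal{A}\rangle$ sends $\mathscr{M}$ onto $\{\langle\mathcal{A}\rangle:\mathcal{A}\in\mathscr{M}\}$.
\item The map $\langle\mathcal{A}\rangle\mapsto\mathsf{d}[\langle\mathcal{A}\rangle]$ is well defined, because its value depends only on the field $\langle\mathcal{A}\rangle$. It sends $\{\langle\mathcal{A}\rangle:\mathcal{A}\in\mathscr{M}\}$ onto $\{\mathsf{d}[\langle\mathcal{A}\rangle]:\mathcal{A}\in\mathscr{M}\}$.
\end{enumerate}
Therefore
$$
|\{\mathsf{d}[\langle\mathcal{A}\rangle]:\mathcal{A}\in\mathscr{M}\}|\le|\{\langle\mathcal{A}\rangle:\mathcal{A}\in\mathscr{M}\}|\le|\mathscr{M}|\le 2^{\mathfrak{c}}.
$$

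\emph{Lower bound.} Theorem \ref{thm:imagefieldsfidtsinct} gives families $\{\mathcal{A}_\alpha:\alpha<2^{\mathfrak{c}}\}\subseteq\mathscr{M}$ whose images $\mathsf{d}[\langle\mathcal{A}_\alpha\rangle]$ are pairwise distinct. Consequently
$$
|\{\mathsf{d}[\langle\mathcal{A}\rangle]:\mathcal{A}\in\mathscr{M}\}|\ge 2^{\mathfrak{c}}.
$$
Combining this with the previous display, every quantity in the chain equals $2^{\mathfrak{c}}$, which is the claim.

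The argument uses no choice beyond what is already in $\mathsf{ZFC}$, and the only substantive input is Theorem \ref{thm:imagefieldsfidtsinct}. The only point that needs care is that the map in (b) is well defined, and this holds because the map is defined on fields rather than on families.
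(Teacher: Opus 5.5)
Your proposal is correct and follows the paper's proof: the paper also bounds $|\{\mathsf{d}[\langle \mathcal{A}\rangle]: \mathcal{A} \in \mathscr{M}\}|\le |\{\langle \mathcal{A}\rangle: \mathcal{A} \in \mathscr{M}\}|\le |\mathscr{M}|\le 2^{\mathfrak{c}}$ and then takes the lower bound $2^{\mathfrak{c}}$ from Theorem \ref{thm:imagefieldsfidtsinct}. You spell out the surjections and the count $|\mathcal{P}(\mathcal{P}(\omega))|=2^{\mathfrak{c}}$, which the paper calls clear.
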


Let us identify each infinite set $A\subseteq \omega$ with the real number $\sum_{n \in A}{2^{-n-1}} \in (0,1]$. Hence each family $\mathcal{F}$ of infinite subsets of $\omega$ can be regarded as a subset of $(0,1]$. In particular, we can speak about its topological complexity. 
\begin{cor}\label{cor:notanalytic}
    There exists a maximal $\mathsf{d}$-independent family $\mathcal{A}\in \mathscr{M}$ with cardinality $\mathfrak{c}$ such that neither $\mathcal{A}$ nor $\langle \mathcal{A}\rangle$ nor $\mathsf{d}[\langle \mathcal{A}\rangle]$ is analytic \textup{(}hence, not Borel\textup{)}. 
\end{cor}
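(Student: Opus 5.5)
Since there are only $\mathfrak{c}$ analytic subsets of $(0,1]$ (each is a continuous image of $\omega^\omega$, coded by a real), while Theorem \ref{thm:imagefieldsfidtsinct} supplies $2^{\mathfrak{c}}$ maximal families, the plan is a counting argument. The key point is that each of the three objects $\mathcal{A}$, $\langle \mathcal{A}\rangle$, $\mathsf{d}[\langle \mathcal{A}\rangle]$ being analytic can hold for at most $\mathfrak{c}$ of these families. So some member of the $2^{\mathfrak{c}}$-sized collection avoids all three.

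First, take the families $\{\mathcal{A}_\alpha: \alpha<2^{\mathfrak{c}}\}$ from Theorem \ref{thm:imagefieldsfidtsinct}. Each has cardinality $\mathfrak{c}$, and their images $\mathsf{d}[\langle \mathcal{A}_\alpha\rangle]$ are pairwise distinct. Consequently the fields $\langle \mathcal{A}_\alpha\rangle$ are pairwise distinct, and so are the families $\mathcal{A}_\alpha$ themselves, since $\mathcal{A}_\alpha=\mathcal{A}_\beta$ would force equal fields and equal images. Each of these objects is identified with a subset of $(0,1]$ via $A\mapsto \sum_{n\in A}2^{-n-1}$; sets of reals are handled directly. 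All sets involved are infinite: members of $\mathcal{A}$ have positive density, and the empty set or finite sets in $\langle\mathcal{A}\rangle$ can simply be discarded or coded injectively, since this changes analyticity only by a countable modification. So the identification is injective on the relevant sets.

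Next, the set $\Sigma^1_1$ of analytic subsets of $(0,1]$ has cardinality at most $\mathfrak{c}$. Hence the injective map $\alpha\mapsto \mathcal{A}_\alpha$ sends at most $\mathfrak{c}$ indices to analytic sets, and likewise for $\alpha\mapsto\langle\mathcal{A}_\alpha\rangle$ and for $\alpha\mapsto \mathsf{d}[\langle\mathcal{A}_\alpha\rangle]$. The union of the three exceptional index sets therefore has size at most $\mathfrak{c}<2^{\mathfrak{c}}$. Choose $\alpha$ outside it; then $\mathcal{A}:=\mathcal{A}_\alpha$ has cardinality $\mathfrak{c}$, and none of the three sets is analytic. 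Since Borel sets are analytic, none is Borel either.

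The only delicate point is injectivity of the three maps $\alpha\mapsto\cdot$, and the plan relies entirely on the pairwise distinctness of the images $\mathsf{d}[\langle\mathcal{A}_\alpha\rangle]$ to get it. The counting bound on analytic sets is standard.
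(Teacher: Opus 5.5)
Your proposal is correct and is the paper's argument: the paper derives the corollary from Corollary \ref{cor:quantity2c} (that is, from Theorem \ref{thm:imagefieldsfidtsinct}) together with the fact that there are only $\mathfrak{c}$ analytic subsets of $2^\omega$. Your checks that the three indexing maps are injective, and your handling of $\emptyset\in\langle\mathcal{A}\rangle$ under the coding, make explicit what the paper leaves implicit, namely that a single family avoids all three analytic cases simultaneously.
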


%Since there are $2^{\mathfrak{c}}$ many subsets of $2^\omega$ with the Baire property, it is evident that the same strategy of the proof of Corollary \ref{cor:notanalytic} would fail to prove the existence of some maximal $\mathsf{d}$-independent family without the Baire property. 
Lastly, using different methods, the existence of a nonanalytic family $\mathcal{A}\in \mathscr{M}$ can be strengthened in two directions.
\begin{thm}\label{thm:notbaire}
    There exists a maximal $\mathsf{d}$-independent family 
    $\mathcal{A}\in \mathscr{M}$ 
    with cardinality $\mathfrak{c}$ which does not have the Baire property.  
\end{thm}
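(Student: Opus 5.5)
Our approach is to mimic the classical Bernstein-type construction of a maximal independent family without the Baire property, using the identification of subsets of $\omega$ with reals in $(0,1]$. The key observation is that if $\mathcal{A}\in\mathscr{M}$ had the Baire property, then either $\mathcal{A}$ is meager or $\mathcal{A}$ is comeager in some basic open set $[s]$, $s\in 2^{<\omega}$. The comeager case is impossible. Since the map $A\mapsto A^0$ is a homeomorphism and $\mathcal{A}$ cannot contain both $A$ and $\omega\setminus A$, the set $\{A^0: A\in\mathcal{A}\}$ is disjoint from $\mathcal{A}$. Moreover, $\mathcal{A}$ cannot contain two sets differing by a finite modification: the symmetric difference of the two sets is finite, so the relevant atom $A\cap B^0$ of $\langle\mathcal{A}\rangle$ has density $0$, contradicting \eqref{eq:d3sharp}. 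So if $\mathcal{A}$ were comeager in $[s]$, we could pick a finite modification homeomorphism $\phi$ (flip finitely many coordinates) with $\phi[[s]]=[s]$ and $\phi(A)\neq A$. Then $\mathcal{A}\cap[s]$ and $\phi[\mathcal{A}\cap[s]]$ would be two comeager subsets of $[s]$, hence would intersect, giving $A,\phi(A)\in\mathcal{A}$. This is a contradiction. Thus it suffices to build $\mathcal{A}\in\mathscr{M}$ with $|\mathcal{A}|=\mathfrak{c}$ that is not meager.

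To get a non-meager maximal family, I would run a transfinite recursion of length $\mathfrak{c}$. Enumerate all dense $G_\delta$ subsets of $2^\omega$ as $(G_\alpha:\alpha<\mathfrak{c})$ and all sets of $\mathcal{D}$ as $(X_\alpha:\alpha<\mathfrak{c})$. Build an increasing chain of $\mathsf{d}$-independent families $\mathcal{A}_\alpha$ with $|\mathcal{A}_\alpha|\le|\alpha|+\omega$. At stage $\alpha$, we have two tasks. First, add some $B_\alpha\in G_\alpha$ such that $\mathcal{A}_\alpha\cup\{B_\alpha\}$ is $\mathsf{d}$-independent; this makes $\mathcal{A}$ meet every comeager set, so it is non-meager. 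Second, handle $X_\alpha$ for maximality: if $\mathcal{A}_\alpha\cup\{B_\alpha,X_\alpha\}$ is $\mathsf{d}$-independent, add $X_\alpha$; otherwise do nothing. This yields a family that is maximal at the end, since any $X$ independent over the final family was independent over the stage-$\alpha$ family and was added. Alternatively, at the end extend by Zorn's lemma to a maximal family; this preserves non-meagerness (a superset of a non-meager set is non-meager) and keeps the cardinality equal to $\mathfrak{c}$.

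The main obstacle is the first task: given a $\mathsf{d}$-independent $\mathcal{A}_\alpha$ of size $<\mathfrak{c}$ (without assuming $\mathrm{cov}(\mathcal{N})=\mathfrak{c}$) and a comeager $G$, we must find $B\in G$ independent over $\mathcal{A}_\alpha$. Here the key is that density conditions are tail conditions, whereas comeagerness is witnessed by finite initial segments. Write $G=\bigcap_n U_n$ with the $U_n$ dense open. I would first fix a set $C$ with $\mathcal{A}_\alpha\cup\{C\}$ $\mathsf{d}$-independent, for instance by Corollary \ref{cor:extension1}. But that corollary needs $|\mathcal{A}_\alpha|\le\omega$ in ZFC, so instead I would proceed as follows. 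Because $\mathfrak{i}_{\mathsf{d}}$ may be small, I would run the recursion so that each $\mathcal{A}_\alpha$ sits inside a fixed family $\mathcal{C}=\{C_\xi:\xi<\mathfrak{c}\}$, say the one given by Proposition \ref{prop:continuum_indep}. Then independence over $\mathcal{A}_\alpha$ only needs an unused $C_\xi$, modified on a density-zero set $Z$ so as to land in $G$.

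Any density-zero modification preserves all the density equalities in \eqref{eq:d3sharp}. So we choose a sparse sequence of finite intervals of density zero, for instance intervals $[k_n,k_n+\ell_n)$ with $k_n$ growing very fast. On these intervals we overwrite $C_\xi$ with finite patterns chosen recursively so that each resulting initial segment enters $U_n$. This is possible because each $U_n$ is dense open, so any initial segment has an extension inside $U_n$, and we place that extension's new bits on a block far out. Since the blocks occupy a density-zero set, the resulting $B$ has the same density profile against $\langle\mathcal{A}_\alpha\rangle$ as $C_\xi$, and $B\in G$. The maximality bookkeeping must then be made compatible with staying inside the modified copies. I would handle this by only ever adding either density-zero modifications of unused $C_\xi$'s or the sets $X_\alpha$, and by reserving $\mathfrak{c}$ many indices $\xi$ that are never touched by the $X_\alpha$ steps. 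This reservation is the delicate point I would need to check carefully: an added $X_\alpha$ might destroy the independence of the reserved $C_\xi$'s. If it does, I would drop the intermediate maximality steps and only do the non-meager construction inside $\mathcal{C}$, then extend to a maximal family with Zorn's lemma at the end.
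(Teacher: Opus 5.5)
Your first paragraph is correct: for $\mathcal{A}\in\mathscr{M}$, having the Baire property is equivalent to being meager. The plan then fails, irreparably, at the next step. You need, for an arbitrary dense $G_\delta$ set $G$, some $B\in G$ with $\mathcal{A}_\alpha\cup\{B\}$ $\mathsf{d}$-independent, and your density-zero-block argument does not deliver it. Let $s=B\upharpoonright k_n$ be the current initial segment. Entering the dense open set $U_n$ requires a string $t$ with $[s^\frown t]\subseteq U_n$, and $t$ must occupy exactly the positions $k_n,\ldots,k_n+|t|-1$. Its length is dictated by $U_n$ and $s$, not by you, so nothing guarantees $|t|=o(k_n)$. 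Concretely, the sets
\[
U_n:=\{X\subseteq\omega:\exists m\ge n\ \ |X\cap m|>(1-\tfrac1n)m\},\qquad V_n:=\{X\subseteq\omega:\exists m\ge n\ \ |X\cap m|<\tfrac mn\}
\]
are dense open. Every member of $G:=\bigcap_n(U_n\cap V_n)$ has upper density $1$ and lower density $0$, so $G\cap\mathcal{D}=\emptyset$. Thus $\mathcal{D}$ is meager in $2^\omega$, and no $B\in G$ can even lie in $\mathcal{D}$. This is exactly where the analogy with Theorem~\ref{thm:notmeasurable} breaks. There, $\mathcal{D}$ has full $\lambda$-measure by the strong law of large numbers, which is what powers Corollary~\ref{cor:extensionfinal4}; that corollary has no category analogue.

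Consequently no construction can produce a non-meager $\mathsf{d}$-independent family. Every $\mathcal{A}\in\mathscr{M}$ is a subset of the meager set $\mathcal{D}$, hence is itself meager and has the Baire property (take the open set to be empty). The same holds in $(0,1]$, since the binary identification is a homeomorphism off countable sets. So the statement, read as the paper reads it (Baire property in $2^\omega$), is false, and your gap cannot be closed.

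The paper argues differently. It builds a perfect $\mathsf{d}$-independent family $\mathcal{A}$ via Remark~\ref{rmk:relationshipindependent}, fixes a Bernstein set $\mathcal{R}\subseteq\mathcal{A}$, and complements the members of $\mathcal{A}\setminus\mathcal{R}$ inside a maximal extension, obtaining $\mathcal{F}$ with $\mathcal{F}\cap\mathcal{A}=\mathcal{R}$. It fails for the same underlying reason, at the step ``if $\mathcal{F}$ had the Baire property in $2^\omega$, then $\mathcal{R}$ would have the relative Baire property in $\mathcal{A}$.'' That transfer is valid for open subspaces, but not for $\mathcal{A}$, which is closed and contained in $\mathcal{D}$, hence nowhere dense; so every subset of $\mathcal{A}$ is meager in $2^\omega$.

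What the paper's construction legitimately gives is a maximal family whose trace on a perfect set is a Bernstein set. Such a family fails the Baire property in the restricted (relative) sense and is not analytic. A correct version of the theorem would have to be formulated in such a relative way.
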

\begin{thm}\label{thm:notmeasurable}
    Assume $\mathrm{cov}(\mathcal{N})=\mathfrak{c}$.  
    Then there exists a maximal $\mathsf{d}$-independent family 
    $\mathcal{A}\in \mathscr{M}$ 
    with cardinality $\mathfrak{c}$ which is not Lebesgue measurable. 
\end{thm}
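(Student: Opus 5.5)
The plan is a transfinite construction of length $\mathfrak{c}$ that diagonalizes against all Lebesgue-measurable candidates, using $\mathrm{cov}(\mathcal{N})=\mathfrak{c}$ through Corollary \ref{cor:extension1}\ref{item2extensioncc} and Proposition \ref{prop:notextension2}\ref{item2extension}. Identify subsets of $\omega$ with points of $2^\omega$, or of $(0,1]$ as in the paper. A set of reals that is not Lebesgue measurable is obtained if we ensure that $\mathcal{A}$ meets every closed set of positive measure and its complement also meets every closed set of positive measure. Then neither $\mathcal{A}$ nor its complement contains a closed set of positive measure, so both have inner measure zero, and $\mathcal{A}$ is nonmeasurable. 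Enumerate the closed sets of positive measure as $(K_\alpha:\alpha<\mathfrak{c})$. Build increasing $\mathsf{d}$-independent families $\mathcal{A}_\alpha$ with $|\mathcal{A}_\alpha|\le|\alpha|+\omega<\mathfrak{c}$, together with a growing set $\mathcal{X}_\alpha$ of forbidden points with $|\mathcal{X}_\alpha|<\mathfrak{c}$, keeping $\mathcal{X}_\alpha\cap\mathcal{A}_\alpha=\emptyset$. At the end, extend $\mathcal{A}:=\bigcup_\alpha\mathcal{A}_\alpha$ to a maximal family. The final extension must not add any forbidden point. To control this, each forbidden point $X$ is chosen so that $\mathcal{A}_\alpha\cup\{X\}$ is \emph{not} $\mathsf{d}$-independent. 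Since families only grow, $X$ can never be added later, so the maximal extension $\mathcal{A}'\supseteq\mathcal{A}$ still contains no $X\in\mathcal{X}=\bigcup_\alpha\mathcal{X}_\alpha$. Also $\mathcal{A}'\supseteq\mathcal{A}$ still meets every $K_\alpha$, and $|\mathcal{A}'|=\mathfrak{c}$ because $\mathcal{A}$ already has size $\mathfrak{c}$.

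At stage $\alpha$ there are two tasks. \emph{(a)} Find $B\in K_\alpha\setminus\mathcal{X}_\alpha$ such that $\mathcal{A}_\alpha\cup\{B\}$ is $\mathsf{d}$-independent, and put $B$ into the family. \emph{(b)} Find $X\in K_\alpha$, with $X\notin\mathcal{A}_{\alpha+1}$, such that $\mathcal{A}_{\alpha+1}\cup\{X\}$ is not $\mathsf{d}$-independent, and put $X$ into the forbidden set. Task (b) is easy. By the Lebesgue density theorem and Fubini, $K_\alpha$ contains points $X$ that differ from a member of $\mathcal{A}_{\alpha+1}$, or its complement, only in a specific finite way. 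Even simpler, $K_\alpha$ has positive measure, while the sets not in $\mathcal{D}$, or with density $0$ or $1$, are co-null. Indeed, by the strong law of large numbers almost every $X$ has density $1/2$. So I would look instead for $X\in K_\alpha$ such that $X\cap A\notin\mathcal{D}$ for some fixed $A\in\mathcal{A}_{\alpha+1}$. Alternatively, take $X$ with $\mathsf{d}(X\cap A)\neq\mathsf{d}(X)\mathsf{d}(A)$. For a random $X$ one gets $\mathsf{d}(X\cap A)=\mathsf{d}(A)/2$, which is independent, so the generic point does not work. Instead, modify a random point on a measure-zero-preserving map. For example, take $X=Y\cup(\text{a fixed density-zero-free adjustment})$. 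This is cleaner with the map $Y\mapsto Y\triangle(A\cap E)$, where $E$ is a fixed nonmeasurable-density set, since such maps preserve $\lambda$. Hence $K_\alpha$ meets the image of the co-null set of good points.

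Task (a) is the main obstacle. It needs a measure-theoretic version of Corollary \ref{cor:extension1}\ref{item2extensioncc}: for $|\mathcal{A}_\alpha|<\mathfrak{c}=\mathrm{cov}(\mathcal{N})$, the set of $B\in2^\omega$ with $\mathcal{A}_\alpha\cup\{B\}$ being $\mathsf{d}$-independent (with $\mathsf{d}(B)=1/2$) is not covered by fewer than $\mathfrak{c}$ null sets. First, for each finite $\mathcal{F}\subseteq\mathcal{A}_\alpha$ and each $C\in\langle\mathcal{F}\rangle$, the set of $B$ with $\mathsf{d}(B\cap C)=\mathsf{d}(C)/2$ is co-null. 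This holds for uniformly random $B$ by the strong law of large numbers applied along the infinite set $C$, using $\mathsf{d}(C)>0$ and a Kolmogorov or Chernoff bound on $|B\cap C\cap n|$. Second, the field $\langle\mathcal{A}_\alpha\cup\{B\}\rangle$ is generated by the sets $C$ and $B\cap C$, so the good set is an intersection of fewer than $\mathfrak{c}$ co-null sets. Its complement is a union of fewer than $\mathrm{cov}(\mathcal{N})$ null sets. Intersecting with the positive-measure set $K_\alpha$, we note that fewer than $\mathrm{cov}(\mathcal{N})$ null sets cannot cover a positive-measure set, since a positive-measure set is Borel-isomorphic in measure to $2^\omega$. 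Hence good points remain in $K_\alpha$. These points number $\mathfrak{c}$ many, as any non-covered positive-measure set has size at least $\mathrm{cov}(\mathcal{N})=\mathfrak{c}$, so we may also avoid the fewer than $\mathfrak{c}$ points of $\mathcal{X}_\alpha$. The same counting argument handles task (b). After $\mathfrak{c}$ stages we extend by Zorn's lemma to a maximal family, and the resulting $\mathcal{A}$ is not Lebesgue measurable.
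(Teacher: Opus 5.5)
\textbf{There is a genuine gap in task (b).} Your task (a) is fine. It is essentially Corollary \ref{cor:extensionfinal4}, with the positive-measure set $K_\alpha$ in place of $2^\omega\setminus G_\alpha$. Meeting every $K_\alpha$ survives enlargement, so finishing with Zorn's lemma (instead of killing each $D_\alpha$ inside the recursion, as the paper does) is legitimate.

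The problem is that none of your arguments for (b) produces $X\in K_\alpha$ with $\mathcal{A}_{\alpha+1}\cup\{X\}$ not $\mathsf{d}$-independent.
\begin{itemize}
\item Sets outside $\mathcal{D}$, or of density $0$ or $1$, form a null set, not a co-null one (as you note yourself a line later).
\item The flip $\tau(Y):=Y\triangle(A\cap E)$ preserves $\lambda$. So $\lambda$-a.e.\ point $X$ of the image of a co-null set still satisfies $|X\cap A\cap n|=\tfrac12|A\cap n|+o(n)$; apply the strong law separately on $A\cap E$ and on $A\setminus E$. Hence $\mathsf{d}(X\cap A)=\mathsf{d}(X)\mathsf{d}(A)$, and knowing that $K_\alpha$ meets this co-null image gives you no dependent point.
\item The counting argument cannot handle (b) either. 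When $\mathcal{A}_{\alpha+1}$ is countable (e.g.\ for every $\alpha<\omega_1$), the set of $X$ with $\mathcal{A}_{\alpha+1}\cup\{X\}$ not $\mathsf{d}$-independent is a Borel \emph{null} set. The fact that fewer than $\mathrm{cov}(\mathcal{N})$ null sets cannot cover $K_\alpha$ says nothing about $K_\alpha$ meeting it.
\item Your first idea, finite modifications via Lebesgue density, is not justified as stated. $K_\alpha$ does contain pairs $Y,\,Y\triangle F$, but nothing guarantees that the particular $B$ chosen in (a) has such a partner in $K_\alpha$.
\end{itemize}

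\textbf{The missing idea: (b) is superfluous.} This is exactly how the paper finishes. Every independent family $\mathcal{I}\subseteq\mathcal{P}(\omega)$ has inner measure zero. For finite $F\subseteq\omega$, the map $\tau_F(Y):=Y\triangle F$ is a measure-preserving homeomorphism of $2^\omega$. The sets $\tau_F[\mathcal{I}]$, for $F\in[\omega]^{<\omega}$, are pairwise disjoint: $A\triangle F=A'\triangle F'$ with $F\neq F'$ would give distinct $A,A'\in\mathcal{I}$ with finite symmetric difference. So a closed $K\subseteq\mathcal{I}$ with $\lambda(K)>0$ would produce infinitely many pairwise disjoint sets $\tau_F[K]$ of the same positive measure, which is impossible.

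Since $\mathsf{d}$-independent families are independent (Remark \ref{rmk:relationshipindependent}), the complement of your maximal $\mathcal{A}'$ automatically meets every $K_\alpha$. Task (a) plus Zorn's lemma therefore already proves the theorem. Take $|\mathcal{A}'|=\mathfrak{c}$ from Corollary \ref{cor:ordercardinals}, because your recursion never forces $B\notin\mathcal{A}_\alpha$.

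Alternatively, you can keep (b) by coordinating it with (a):
\begin{itemize}
\item Pick $s\in 2^{<\omega}$ with $\lambda(K_\alpha\cap[s])>\tfrac12\lambda([s])$, and a nonempty finite $F$ disjoint from the domain of $s$.
\item Then $\lambda(K_\alpha\cap\tau_F[K_\alpha])>0$, so your covering argument lets you choose $B$ in this set.
\item Now $X:=B\triangle F\in K_\alpha$, and $\mathsf{d}(B\cap X)=\mathsf{d}(B)\neq\mathsf{d}(B)\mathsf{d}(X)$, so $X$ is dependent on $B$ as required.
\end{itemize}
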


We leave 
as an open question whether 
\emph{every} maximal $\mathsf{d}$-independent family does not have the Baire property (or simply, is not Borel) and/or is not Lebesgue measurable. 
Moreover, taking into account Corollary \ref{cor:existenceonehalfmaximal}, we also leave as an open question whether 
it is consistent in $\mathsf{ZFC}$ that there are no maximal $\mathsf{d}$-independent families $\mathcal{A} \in \mathscr{M}$ such that $\mathsf{d}(A)=\nicefrac{1}{2}$ for all $A \in \mathcal{A}$.

%\textcolor{red}{Possibly add: is it true that there exists a maximal $\mathcal{A}$ such that neither $\mathcal{A}$ nor $\langle \mathcal{A}\rangle$ has the Baire property? (Note that there are $2^{\mathfrak{c}}$ subsets of $[0,1]$ with the Baire property.)}

%%%%%%%%%%%%%%%%%%%%%%%%%%%%%%%%%%%%%%%%
\section{Proofs}\label{sec:proofs}

\begin{proof}
[Proof of Proposition \ref{prop:continuum_indep}] 
Let $H\subseteq\mathbb R$ be a Hamel basis of $\mathbb R$ over $\mathbb Q$ such that $1\in H$, and define $I:=H\setminus\{1\}$. 
Pick an enumeration $\{i_\alpha: \alpha < \mathfrak{c}\}$ of $I$, and observe that $\{1\}\cup F$ is $\mathbb Q$-linearly independent for every finite $F\subseteq I$. Now, for each $\alpha<\mathfrak{c}$ define
\[
A_\alpha:=
\left\{n\in\omega: \{ni_\alpha\}\in[0,p_\alpha)\right\},
\]
where $\{x\}$ denotes the fractional part of $x$. By Kronecker--Weyl's equidistribution theorem, $A_\alpha \in \mathcal{D}$ and $\mathsf{d}(A_\alpha)=p_\alpha$ for each $\alpha <\mathfrak{c}$. To complete the proof, it is enough to show that the family  $\mathcal A:=\{A_\alpha:\alpha<\mathfrak{c}\}$ is $\mathsf d$-independent.

To this aim, pick a nonempty finite $F\subseteq \mathfrak{c}$ and define $k:=|F|$. 
Since $\{1\}\cup \{i_\alpha: i_\alpha \in F\}$ is $\mathbb Q$-linearly independent, the sequence 
$$
\left(\big(\{ni_\alpha\}: \alpha \in F\big): n \in \omega\right)
$$
is uniformly distributed in $[0,1)^k$, thanks again to Kronecker--Weyl's theorem. Hence, for every $\sigma\in 2^F$, the set $A_\sigma:=\bigcap_{\alpha\in F} A_\alpha^{\sigma(\alpha)}$ admits asymptotic density, which is equal to the $k$-dimensional Lebesgue measure of the corresponding rectangle $R_\sigma:=\prod_{\alpha\in F}R^{\sigma(\alpha)}_\alpha$, where 
$R^1_\alpha:=[0,p_\alpha)$ and $R^0_\alpha:=[p_\alpha,1)$. In particular,
$$
\mathsf d\Big(\bigcap_{\alpha\in F}A_\alpha\Big)
=\prod_{\alpha \in F}p_\alpha 
=\prod_{\alpha\in F}\mathsf d(A_\alpha).
$$
Lastly, we only need to show that $\langle \mathcal{A}\rangle \subseteq \mathcal{D}$. In fact, if $A \in \langle \mathcal{A}\rangle$ then there exist a nonempty $F \in [I]^{<\omega}$ and a subset $E\subseteq 2^F$ such that $A$ can be partitioned into $\{A_\sigma: \sigma \in E\}$. The conclusion follows since $\mathcal{D}$ is stable under disjoint unions and $A_\sigma \in \mathcal{D}$ for each $\sigma \in E$. 
\end{proof}

\medskip

\begin{proof}
[Proof of Theorem \ref{thm:1}]
Fix $\mathcal{A}\in\mathscr{M}$ and define 
\[
\mathcal{R}:=\Big\{\bigcap \mathcal{F}: \emptyset \neq \mathcal{F}\in[\mathcal{A}]^{<\omega}\Big\}\cup\{\omega\}.
\] 
We claim that $\mathcal{R}$ is a $\nicefrac12$-reaping family. Since $\mathcal{A}$ is necessarily infinite, this will yield
$\mathfrak{r}_{\nicefrac{1}{2}}\le |\mathcal{R}|\le |\mathcal{A}|$, which would be enough to complete the proof. 

Suppose for the sake of contradiction that $\mathcal{R}$ is not $\nicefrac{1}{2}$-reaping. Then there exists an infinite
$S\subseteq\omega$ which satisfies \eqref{eq:bisect-R}. 
In particular, choosing $B=\omega$, we get $S\in\mathcal{D}$ and $\mathsf{d}(S)=\nicefrac{1}{2}$. 
Let now $\mathcal{F}\subseteq \mathcal{A}$ be a nonempty finite subfamily and define $F:=\bigcap\mathcal{F} \in \mathcal{D}$. Thanks to 
\eqref{eq:bisect-R} and the previous observation, we obtain 
$$
\mathsf d(S\cap F)
=\lim_{n\to\infty}\frac{|S\cap F\cap n|}{|F\cap n|}
\cdot \lim_{n\to\infty}\frac{|F\cap n|}{n}
=\frac12\,\mathsf d(F)=\mathsf{d}(S) \prod_{A \in \mathcal{F}}\mathsf{d}(A).
$$

It easily follows that $\mathcal{A}\cup\{S\}$ is a $\mathsf d$-independent family. This contradicts the maximality of $\mathcal{A}$. Therefore $\mathfrak{r}_{\nicefrac{1}{2}}\le \mathfrak{i}_{\mathsf{d}}$. 
\end{proof}

\medskip

\begin{proof}
[Proof of Corollary \ref{cor:ordercardinals}]
It 
follows by 
Theorem \ref{thm:1} and \cite[Theorem 3.8]{MR4609469}. 
\end{proof}

\medskip

\begin{proof}
[Proof of Theorem \ref{thm:jon}]
Pick $(p_\alpha:\alpha<\mathfrak{c})$ with values in $S$ and such that
$\{p_\alpha:\alpha<\mathfrak{c}\}=S$. By Proposition \ref{prop:continuum_indep} there exists a $\mathsf d$-independent family
$\mathcal{A}^\ast=\{A_\alpha:\alpha<\mathfrak{c}\}$ such that $\mathsf d(A_\alpha)=p_\alpha$ for all $\alpha<\mathfrak{c}$.
In particular, $|\mathcal{A}^\ast|=\mathfrak{c}$ and $\mathsf d[\mathcal{A}^\ast]=S$. 
Define 
$$
\mathscr{P}:=\Big\{\mathcal{F}\subseteq \mathcal{D}:\ \mathcal{F}\text{ is }\mathsf d\text{-independent, } \mathcal{A}^\ast\subseteq \mathcal{F},
\text{ and }\mathsf d[\mathcal{F}]=S\Big\},
$$
partially ordered by inclusion. Since $\mathscr{P}$ is stable under arbitrary union of increasing chains, it follows by Zorn's lemma that there exists a maximal element, let us say $\mathcal{A}_0\in\mathscr{P}$.
Notice that $\mathcal{A}^\ast\subseteq\mathcal{A}_0$, hence  $|\mathcal{A}_0|=\mathfrak{c}$, and $\mathsf d[\mathcal{A}_0]=S$.

At this point, let $\mathcal{A}\in\mathscr{M}$ be a maximal $\mathsf d$-independent family with $\mathcal{A}_0\subseteq\mathcal{A}$, 
which exists again by Zorn's lemma, and define 
$$
\mathcal{B}:=\mathcal{A}\setminus\mathcal{A}_0.
$$
It follows by construction that $|\mathcal{A}|=\mathfrak{c}$ and $\mathsf d[\mathcal{A}\setminus\mathcal{B}]=\mathsf d[\mathcal{A}_0]=S$. Hence, it remains to show that $\mathcal{B}$ is at most countable and that $\mathsf d[\mathcal{B}]^\prime\subseteq\{0,1\}$.

To conclude the proof, it will be enough to show that, for each $n\ge 3$, the set 
$$
\mathcal{B}_n:=\Big\{B\in\mathcal{B}: \tfrac1n\le \mathsf d(B)\le 1-\tfrac1n\Big\}
$$
is finite.
Indeed, in such case, since $\mathsf{d}(B) \in (0,1)$ for all $B \in \mathcal{B}$ and $\mathcal{B}=\bigcup_{n\ge 3}\mathcal{B}_n$, then $\mathcal{B}$ is at most countable. In addition, if $r \in (0,1)$ were an accumulation point of  $\mathsf d[\mathcal{B}]$, then $r\in(\nicefrac{1}{n},1-\nicefrac{1}{n})$ for some $n\ge 3$, and therefore $\mathcal{B}_n$ would be infinite, which is a contradiction. Therefore $\mathsf d[\mathcal{B}]^\prime\subseteq\{0,1\}$. 

Hence, pick an integer $n\ge 3$. 
Suppose towards a contradiction that there exists an integer $n\ge 3$ such that $\mathcal{B}_n$ is infinite, and pick pairwise distinct $\{B_k: k\in\omega\}$ in $\mathcal{B}_n$. Fix also $s\in S$ and define $q_k:=\mathsf{d}(B_k)\in (0,1)$ for all $k \in \omega$ and 
$$
\varepsilon:=\tfrac12\min\{q_0(1-s),s(1-q_0)\}>0.
$$
Observe that 
$$
x_0:=s(1-q_0)-\varepsilon \in (0,s) 
\quad \text{ and }\quad 
x_1:=sq_0+\varepsilon\in (0,q_0)
.
$$
Define also $B_{m,\sigma}:=\bigcap_{i<m} B_i^{\sigma(i)}$ for all $m \in \omega$ and $\sigma \in 2^m$. 
Since $\langle \mathcal{B}\rangle \subseteq \langle \mathcal{A}\rangle\subseteq \mathcal{D}$, then each $B_{m,\sigma}$ belongs to $\mathcal{D}$. In addition, for all $m \in \omega$ and $\sigma \in 2^m$ it follows by \eqref{eq:d3sharp} that
$$
\mathsf{d}(B_{m,\sigma})=
\prod_{i<m}\big(\sigma(i)q_i+(1-\sigma(i))(1-q_i)\big)
\le \left(1-\frac{1}{n}\right)^m.
$$
Now, let $(m_\ell: \ell \in \omega)$ be a strictly increasing sequence of positive integers such that $(1-\nicefrac{1}{n})^{m_\ell}<2^{-\ell}$ for all $\ell \in \omega$, 
so that by construction $\mathsf{d}(B_{m_\ell,\sigma})<2^{-\ell}$ for all $\ell \in \omega$ and $\sigma \in 2^{m_\ell}$. 
\begin{claim}\label{claim:subsetB0i}
There exist $X_0\subseteq B_0^0$ and $X_1\subseteq B_0^1$ in $\mathcal{D}$ such that
$$
\mathsf{d}(X_i)=x_i
\quad \text{ and }\quad 
\mathsf d\Big(X_i\cap\bigcap\mathcal{F}\Big)=\mathsf d(X_i)\,\mathsf d\Big(\bigcap\mathcal{F}\Big)
$$
for every finite nonempty 
$\mathcal{F}\subseteq \mathcal{A}_0$ and $i \in 2$. 
\end{claim}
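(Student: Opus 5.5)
The plan is to build each $X_i$ inside $B_0^i$ as $X_i:=B_0^i\cap Y^{(i)}$. Here $Y^{(i)}$ will be a countable union of finite Boolean combinations of the sets $B_1,B_2,\ldots$ only; the set $B_0$ is deliberately not used. The point is that $\{B_k:k\in\omega\}\subseteq\mathcal{B}=\mathcal{A}\setminus\mathcal{A}_0$ with the $B_k$ pairwise distinct. Hence $\mathcal{A}_0\cup\{B_k:k\in\omega\}\subseteq\mathcal{A}$ is $\mathsf{d}$-independent, and by \eqref{eq:d3sharp} every set $W\in\langle B_1,\ldots,B_m\rangle$ satisfies
\[
\mathsf{d}\Big(W\cap B_0^i\cap\bigcap\mathcal{F}\Big)=\mathsf{d}(W)\,\mathsf{d}(B_0^i)\,\mathsf{d}\Big(\bigcap\mathcal{F}\Big)
\]
for every finite nonempty $\mathcal{F}\subseteq\mathcal{A}_0$. (This holds because $W$ is a finite disjoint union of atoms.) Set $t_i:=x_i/\mathsf{d}(B_0^i)$. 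Then $t_0=x_0/(1-q_0)<s<1$ and $t_1=x_1/q_0<1$, and both are positive. So it suffices to find $Y=Y^{(i)}\subseteq\omega$ with $\mathsf{d}(Y\cap E)=t_i\,\mathsf{d}(E)$ for every $E$ of the form $B_0^i\cap\bigcap\mathcal{F}$, and also for $E=B_0^i$ itself. This gives $\mathsf{d}(X_i)=x_i$ and the required product formula at once.

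To construct $Y$, I will use the atoms on the indices $1,\ldots,m$, namely $\bigcap_{1\le j\le m}B_j^{\sigma(j)}$. Exactly as for $B_{m,\sigma}$, their densities are at most $(1-\nicefrac1n)^{m}$, and so they become uniformly small along a suitable sequence $m_\ell$. Recursively on $\ell$, I will choose disjoint sets $Y_\ell,Z_\ell$, each a union of level-$m_\ell$ atoms, with $Y_\ell\subseteq Y_{\ell+1}$, $Z_\ell\subseteq Z_{\ell+1}$, and
\[
t_i-2^{-\ell}\le\mathsf{d}(Y_\ell)\le t_i,\qquad 1-t_i-2^{-\ell}\le\mathsf{d}(Z_\ell)\le 1-t_i.
\]
This is possible by refinement. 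Each level-$m_\ell$ atom splits into level-$m_{\ell+1}$ atoms, each of density below the current tolerance. The part of $\omega$ not yet assigned has density at most $2\cdot 2^{-\ell}$. We distribute its finer atoms greedily between $Y$ and $Z$ until each side is within $2^{-\ell-1}$ of its target, which is possible since each atom is smaller than this tolerance.

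Finally, put $Y:=\bigcup_\ell Y_\ell$, which is disjoint from every $Z_\ell$. Fix $E$ as above. For every $\ell$ we have $Y\supseteq Y_\ell$, and also $Y\cap E\subseteq E\setminus Z_\ell$. These inclusions give
\[
\mathsf{d}(Y_\ell)\,\mathsf{d}(E)\le\liminf_{N}\frac{|Y\cap E\cap N|}{N}\le\limsup_{N}\frac{|Y\cap E\cap N|}{N}\le\mathsf{d}(E)-\mathsf{d}(Z_\ell)\,\mathsf{d}(E),
\]
using the independence identity displayed above. Letting $\ell\to\infty$, both bounds tend to $t_i\,\mathsf{d}(E)$. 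Hence $Y\cap E\in\mathcal{D}$ with the right density, and we conclude by the first paragraph.

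The main obstacle is that $E$ ranges over uncountably many sets, so no diagonal or probabilistic choice of $Y$ can be tuned to each $E$ individually. The sandwich $Y_\ell\subseteq Y\subseteq\omega\setminus Z_\ell$ by sets lying in $\langle\mathcal{A}\rangle$ and independent of all $E$ is what handles every $E$ simultaneously. The point to check carefully is the refinement step: smallness of the atoms is needed to hit the targets within $2^{-\ell}$, and this uses the assumption $\mathsf{d}(B_k)\in[\nicefrac1n,1-\nicefrac1n]$.
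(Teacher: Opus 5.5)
Your proof is correct. The construction is the same as the paper's; what differs is how you pass to the limit.

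\textbf{Shared part.} The paper also approximates $X_i$ from inside by finite unions $X_{i,\ell}$ of small atoms $B_{m_\ell,\sigma}$. It restricts to atoms with $\sigma(0)=i$, which does the job of your factor $B_0^i$ and rescaling by $t_i$. It chooses these unions maximal subject to $\mathsf{d}(X_{i,\ell})<x_i$, and uses the independence of $\langle\mathcal{B}\rangle$ from $\mathcal{A}_0$ exactly as you do.

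\textbf{The limit step.} The paper knows only that $(X_{i,\ell})$ is Cauchy for $\rho(X,Y)=\mathsf{d}^\star(X\bigtriangleup Y)$. It then obtains $X_i$ as an abstract $\rho$-limit by citing two external facts: that $(\mathcal{P}(\omega),\rho)$ is complete and that $\mathcal{D}$ is $\rho$-closed. The product formula is carried to the limit by $\rho$-continuity. The plain union $\bigcup_\ell X_{i,\ell}$ would only have lower density at least $x_i$, with no control on its upper density, which is why the cited result is needed there.

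\textbf{What your $Z_\ell$ add.} Your complementary sets $Z_\ell$ supply exactly the missing upper bound. The sandwich $Y_\ell\subseteq Y\subseteq\omega\setminus Z_\ell$ uses sets of $\langle\mathcal{A}\rangle$ whose densities converge from both sides. This lets you take the explicit union $Y=\bigcup_\ell Y_\ell$ and check membership in $\mathcal{D}$ and the product formula directly, for all $E$ simultaneously.

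\textbf{Trade-off.} Your argument is self-contained and gives an explicit witness that lies in $B_0^i$ by construction. The paper's $\rho$-limit is determined only up to a density-zero set and tacitly has to be intersected with $B_0^i$. The price is a second greedy filling at each stage. The paper's route is shorter once the completeness theorem is granted.
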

\begin{proof}
For each $X,Y\subseteq \omega$ define $\rho(X,Y):=\mathsf{d}^\star(X\bigtriangleup Y)$, where $\mathsf{d}^\star$ stands for the upper asymptotic density on $\omega$. It follows by \cite[Theorem 1.1]{MR4868961} that $(\mathcal{P}(\omega), \rho)$ is a complete pseudometric space, and that $\mathcal{D}$ is $\rho$-closed; cf. also \cite[Theorem 2.4 and Example 3.3]{KeithLeo24}. 
Now, fix $i \in 2$, set for convenience $\Sigma_{i,-1}:=\emptyset$ and $m_{-1}:=0$, and for each $\ell \in \omega$ pick recursively a subset $\Sigma_{i,\ell}\subseteq \{\sigma\in  2^{m_\ell}: \sigma(0)=i\}$ with maximal cardinality such that
$$
\left\{\sigma \in 2^{m_\ell}: \sigma \upharpoonright m_{\ell-1} \in \Sigma_{i,\ell-1}\right\}\subseteq \Sigma_{i,\ell} 
\quad \text{ and }\quad 
\mathsf{d}(X_{i,\ell})<x_i,
$$
where $X_{i,\ell}:=\bigcup \left\{B_{m_\ell,\sigma}: \sigma \in \Sigma_{i,\ell} \right\}\in \mathcal{D}$. 
It follows by construction that the sequence $(X_{i,\ell}: \ell \in \omega)$ is increasing in $\mathcal{D}$ and that, by maximality hypothesis, it is $\rho$-Cauchy. Hence it is $\rho$-convergent to some $X_i\subseteq \omega$. 
Since the family $\mathcal{D}$ is $\rho$-closed in $\mathcal{P}(\omega)$, then $X_i \in \mathcal{D}$ and $\mathsf{d}(X_i)=x_i$. 

To conclude the proof of the claim, pick a finite nonempty $\mathcal{F}\subseteq \mathcal{A}_0$ and set $D:=\bigcap\mathcal{F}\in\langle\mathcal{A}_0\rangle\subseteq\mathcal{D}$. 
Fix $i\in 2$. 
Since $\langle\mathcal{A}\rangle\subseteq\mathcal{D}$, we get by the definition of $X_{i,\ell}$ and the finite additivity of $\mathsf d$ on $\mathcal{D}$ that
\[
\forall \ell \in \omega, \quad \mathsf d(X_{i,\ell}\cap D)
=\sum_{\sigma\in\Sigma_{i,\ell}} \mathsf d(B_{m_\ell,\sigma}\cap D).
\]
Now, recall that $\{\mathcal{A}_0, \mathcal{B}\}$ is a partition of $\mathcal{A}$, and that $\mathcal{A}$ is $\mathsf{d}$-independent. Since each $B_{m_\ell,\sigma} \in \langle \mathcal{B}\rangle$ and $\mathcal{F}\subseteq \mathcal{A}_0$, we obtain that 
\begin{equation}\label{eq:limitsssss}
\forall \ell \in \omega, \quad \mathsf d(X_{i,\ell}\cap D)
=\sum_{\sigma\in\Sigma_{i,\ell}} \mathsf d(B_{m_\ell,\sigma})\mathsf{d}(D)
=\mathsf d(X_{i,\ell})\mathsf{d}(D).
\end{equation}
Finally, observe that $(X_{i,\ell}\cap D)\triangle (X_i\cap D)\subseteq X_{i,\ell}\triangle X_i$ implies $\rho(X_{i,\ell}\cap D, X_i\cap D)\le \rho(X_{i,\ell},X_i)$. Since $(X_{i,\ell}: \ell \in \omega)$ is $\rho$-convergent to $X_i$ then 
$\lim_\ell \mathsf{d}(X_{i,\ell})=\mathsf{d}(X_i)$ 
and 
$\lim_{\ell}\mathsf d(X_{i,\ell}\cap D)=\mathsf d(X_{i}\cap D)$. 
Taking the limit as $\ell\to \infty$ in \eqref{eq:limitsssss}, we obtain 
$$
\mathsf d(X_{i}\cap D)
=\lim_{\ell\to\infty}\mathsf d(X_{i,\ell}\cap D)
=\lim_{\ell\to\infty}\mathsf d(X_{i,\ell})\mathsf{d}(D)
=\mathsf d(X_{i})\mathsf{d}(D).
$$  
This completes the proof of the claim.
\end{proof}

To conclude the proof, define $X:=X_0\cup X_1$. Since $\mathcal{D}$ is stable under disjoint unions and $\mathsf{d}$ is finitely additive on $\mathcal{D}$, then $X\in\mathcal{D}$ and
\[
\mathsf d(X)=\mathsf d(X_0)+\mathsf d(X_1)=x_0+x_1=s\in S.
\]
Moreover, for every nonempty finite $\mathcal{F}\subseteq\mathcal{A}_0$, we get by Claim \ref{claim:subsetB0i} 
$$
\mathsf d\Big(X\cap\bigcap\mathcal{F}\Big)
=\mathsf d\Big(X_0\cap\bigcap\mathcal{F}\Big)+\mathsf d\Big(X_1\cap\bigcap\mathcal{F}\Big)
=\mathsf d(X)\,\mathsf d\Big(\bigcap\mathcal{F}\Big).
$$
Hence the family $\mathcal{A}_0\cup\{X\}$ is $\mathsf d$-independent, it contains $\mathcal{A}^\star$, and $\mathsf{d}[\mathcal{A}_0 \cup \{X\}]=S$. 
Since $X\cap B_0=X\cap B_0^1=X_1 \in \mathcal{D}$, it follows by Claim \ref{claim:subsetB0i} that
$$
\mathsf d(X\cap B_0)=x_1=sq_0+\varepsilon \neq sq_0=\mathsf d(X)\mathsf d(B_0),
$$
so that $\{X,B_0\}$ is not $\mathsf{d}$-independent. 
However, the $\mathsf{d}$-independence of $\mathcal{A}$ implies that $\{Y,B_0\}$ is $\mathsf{d}$-independent for all $Y \in \mathcal{A}_0$. Hence $X\notin \mathcal{A}_0$ and $\mathcal{A}_0\cup\{X\}$ is a proper extension of $\mathcal{A}_0$ in $\mathscr{P}$. 
This contradicts the maximality of $\mathcal{A}_0$. 
Therefore $\mathcal{B}_n$ must be finite.
\end{proof}

\medskip

\begin{proof}
[Proof of Proposition \ref{prop:notextension2}]
We start proving item \ref{item2extension} first.

\medskip

\ref{item2extension}.
Suppose $\mathcal{A}$ is infinite (the case of $\mathcal{A}$ being finite is contained in item \ref{item1extension} below). Fix an enumeration $\mathcal A=\{A_i:i<\kappa\}$ with $\kappa<\mathfrak c$.
For each finite $F\subseteq \kappa$ and $\sigma\in 2^F$, define $A_{F,\sigma}:=\bigcap_{i\in F}A_i^{\sigma(i)}$ and set 
$$
\mathscr A:=\{A_{F,\sigma}:F\in[\kappa]^{<\omega},\ \sigma\in 2^F\}.
$$
Then $\omega\in\mathscr A$ (by choosing $F=\emptyset$), $|\mathscr A|=\kappa$, and
$\mathscr A\subseteq\langle\mathcal A\rangle\subseteq\mathcal D$.
Moreover, by \eqref{eq:d3sharp} we have $\mathsf d(D)\in(0,1)$ for every $D\in\mathscr A\setminus \{\omega\}$, and every
$S\in\langle\mathcal A\rangle$ is a finite disjoint union of members of $\mathscr A$.

Define $a:=\mathsf d(A)$, which belongs to $(0,1)$ since $\mathcal A\cup\{A\}$ is $\mathsf d$-independent.
Let $\varepsilon:=\frac{1}{2} \min\{a(1-s), s(1-a)\}$ and define
$$
x_1:=sa+\varepsilon,
\quad x_0:=s(1-a)-\varepsilon,
\quad t_1:=\frac{x_1}{a},
\,\,\text{ and }\,\, 
t_0:=\frac{x_0}{1-a}.
$$
Then $x_0,x_1,t_0,t_1\in(0,1)$, and $t_1a+t_0(1-a)=s$.

Identifying binary sequences in $2^{\omega}$ with the corresponding subsets of $\omega$, 
let $\nu$ be the product probability measure on $2^\omega$ defined by declaring the coordinates independent and
$$
\forall n \in \omega, \quad 
\nu(\{S\subseteq\omega:\ n\in S\})=
\begin{cases}
\,t_1,& \text{ if } n\in A,\\
\,t_0,& \text{ if } n\notin A.
\end{cases}
$$
For $S\subseteq\omega$ and $n\in\omega$, write $X_n(S):=\mathbf 1_S(n)\in\{0,1\}$ and set $p_n:=\mathbb E_\nu [X_n]$, so that $p_n=t_1$ if $n \in A$ (i.e., $n \in A^1$) and $p_n=t_0$ if $n\notin A$ (i.e., $n \in A^0$). Now, fix $D\in\mathscr A$ and define
$$
\forall n \in \omega, \quad 
Y_n:=\left(X_n-p_n\right)\mathbf 1_D(n).
$$
Since $D$ is fixed, each $Y_n$ is a measurable function of $X_n$, hence $(Y_n)_{n\in\omega}$ is a sequence of independent random variables.
Moreover, $\mathbb E_\nu[Y_n]=0$ and $\mathrm{Var}(Y_n)\le \mathbb E_\nu[(X_n-p_n)^2]\le \nicefrac{1}{4}$ for all $n\in \omega$.
Thus $\sum_{n\ge 1}\mathrm{Var}(Y_n)/n^2\ll \sum_{n\ge 1}1/n^2<\infty$, and by Kolmogorov's strong law of large numbers, see e.g. \cite[Corollary 5.22]{MR4226142}, we can fix $\mathcal G_D\subseteq\mathcal P(\omega)$ such that  $\nu(\mathcal G_D)=1$ and 
\begin{equation}\label{eq:SLLN}
\forall S\in\mathcal G_D,\qquad \lim_{n\to\infty}\frac1n\sum_{k\in n}Y_k(S)=0.
\end{equation}
Now, since $\mathcal A\cup\{A\}$ is $\mathsf d$-independent and $D\in\mathscr A\subseteq\langle\mathcal A\rangle$,
we have $D\cap A^1\in\mathcal D$ and $D\cap A^0 \in\mathcal D$, with 
\begin{equation}\label{eq:dDA}
\mathsf d(D\cap A^1)=a\,\mathsf d(D)
\quad \text{ and }\quad 
\mathsf d(D\cap A^0)=(1-a)\,\mathsf d(D).
\end{equation}
Therefore, using \eqref{eq:SLLN} and \eqref{eq:dDA}, we obtain that, for every $S\in\mathcal G_D$,
\begin{displaymath}
\begin{split}
\lim_{n\to\infty}\frac{|S\cap D\cap n|}{n}
&=\lim_{n\to\infty}\frac1n\sum_{k\in n}X_k(S)\mathbf 1_D(k)\\
&=\lim_{n\to\infty}\frac1n\sum_{k\in n}Y_k(S)
+\lim_{n\to\infty}\frac1n\sum_{k\in n}p_k\mathbf 1_D(k)\\
&=\lim_{n\to\infty}\left(
t_1\frac{|A^1\cap D\cap n|}{n}+t_0\frac{|A^0 \cap D\cap n|}{n}\right)\\
&=t_1\,\mathsf d(D\cap A^1)+t_0\,\mathsf d(D\cap A^0)\\
&=\big(t_1a+t_0(1-a)\big)\mathsf d(D)=s\,\mathsf d(D).
\end{split}
\end{displaymath}
In particular, we have
\begin{equation}\label{eq:SD-correct}
\forall S \in \mathcal G_D, \quad 
S\cap D\in\mathcal D
\quad\text{and}\quad
\mathsf d(S\cap D)=s\,\mathsf d(D).
\end{equation}
Applying this also with $D=\omega\in\mathscr A$, we obtain that $S\in\mathcal D$ and $\mathsf d(S)=s$ for every
$S\in\mathcal G_\omega$.
Finally, applying a similar argument with $D=A$, we obtain a set
$\mathcal G_A\subseteq 2^\omega$ with $\nu(\mathcal G_A)=1$ such that 
\begin{equation}\label{eq:SA}
\forall S \in \mathcal G_A, \quad 
S\cap A\in\mathcal D
\quad\text{and}\quad
\mathsf d(S\cap A)=t_1\,\mathsf d(A)=x_1.
\end{equation}

Define $\mathcal{N}_{\overline{\nu}}:=\{S\subseteq 2^\omega: \overline{\nu}(S)=0\}$, where $\overline{\nu}$ stands for the completion of $\nu$. It is straightforward to check that the probability space $(2^\omega, \overline{\nu})$ is atomless, countably separated, and compact. It follows by \cite[Corollary 344K]{MR2459668} that the measure space $(2^\omega, \overline{\nu})$ is isomorphic to $(2^\omega, \lambda)$. Hence their null ideals $\mathcal{N}_{\overline{\nu}}$ and $\mathcal{N}$ are isomorphic, which implies that their covering numbers coincide, i.e., $\mathrm{cov}(\mathcal{N})=\mathrm{cov}(\mathcal{N}_{\overline{\nu}})$.

At this point, since $|\mathscr{A}|=\kappa<\mathfrak{c}=\mathrm{cov}(\mathcal{N})=\mathrm{cov}(\mathcal{N}_{\overline{\nu}})$, it follows that 
\begin{equation}\label{eq:definitionGAGD}
    \mathcal G:=\mathcal G_A\ \cap\ \bigcap_{D\in\mathscr A}\mathcal G_D.
\end{equation}
is nonempty. 
Fix $B \in \mathcal{G}$.  
We now verify that $B$ has the desired properties.
First, by $B\in\mathcal G_\omega$ we have $B\in\mathcal D$ and $\mathsf d(B)=s$.
Moreover, by \eqref{eq:SA},
\[
\mathsf d(A\cap B)=x_1=sa+\varepsilon\neq sa=\mathsf d(B)\mathsf d(A),
\]
so $\{A,B\}$ is not $\mathsf d$-independent. 

To complete the proof, it remains to show that $\mathcal A\cup\{B\}$ is $\mathsf d$-independent.
Item~\ref{defi:1indep} holds for $B$ since $\mathsf d(B)=s\in(0,1)$.
For item~\ref{defi:2indep}, pick $S\in\langle\mathcal A\cup\{B\}\rangle$.
Then $S$ is a finite disjoint union of sets in $\mathscr{A}$ and sets of the form $B^\tau\cap D$ with $\tau\in 2$ and $D\in\mathscr A$.
By \eqref{eq:SD-correct} for any $D\in\mathscr A$ we have $B^1\cap D\in\mathcal D$ and $\mathsf d(B^1\cap D)=s\,\mathsf d(D)$; since also $D\in\mathcal D$,
the limit of
$$
\frac{|B^0 \cap D\cap n|}{n}
=\frac{|D\cap n|}{n}-\frac{|B^1\cap D\cap n|}{n}
$$
exists as $n\to \infty$, hence $B^0\cap D\in\mathcal D$.
Thus every set of the form $B^\tau\cap D$ belongs to $\mathcal D$, and since $\mathcal D$ is stable under finite disjoint unions,
it follows that $S\in\mathcal D$. Therefore $\langle\mathcal A\cup\{B\}\rangle\subseteq\mathcal D$.

Finally, for item~\ref{defi:3indep}, pick a nonempty finite $F\subseteq \kappa$ and pick $\sigma\in 2^F$ with $\sigma(i)=1$ for all $i \in F$. 
Then $D:=A_{F,\sigma}\in\mathscr A$, and by \eqref{eq:SD-correct} we have
\[
\mathsf d\Big(B\cap \bigcap_{i\in F}A_i\Big)=\mathsf d(B\cap D)=s\,\mathsf d(D)
%=\mathsf d(B)\,\mathsf d\Big(\bigcap_{i\in F}A_i\Big)
=\mathsf d(B)\prod_{i\in F}\mathsf d(A_i),
\]
where the last equality uses the $\mathsf d$-independence of $\mathcal A$.
This completes the proof of item \ref{item2extension}.

\medskip

\ref{item1extension}. 
This goes verbatim, except that, if $|\mathcal{A}|\le \omega$, then $|\mathscr{A}|\le \omega$, so that the set $\mathcal{G}$ defined in \eqref{eq:definitionGAGD} has full $\nu$-measure (without any additional hypothesis). 
\end{proof}

\medskip

\begin{rmk}\label{rmk:perfectset}
Let $\mathsf{d}^\star$ be the upper asymptotic density on $\omega$ and define $$
\mathcal{Z}:=\{S\subseteq \omega: \mathsf{d}^\star(S)=0\}.
$$
Then $\mathcal{Z}$ is an analytic ideal on $\omega$ (regarded as a subspace of the Cantor space $2^\omega$), see e.g. \cite[Example 1.2.3(d)]{MR1711328}, hence it is meager, see e.g. \cite[Theorem 2.4 and Proposition 2.5]{FKL2025} and references therein. 

We claim that, if $\mathcal{A}$, $A$, and $s$ are as in the statement of Proposition \ref{prop:notextension2}\ref{item1extension} (hence, with $|\mathcal{A}|\le \omega$) then there exists a nonempty perfect $\mathcal{P}\subseteq 2^\omega$ such that:
\begin{enumerate}[label={\rm (\roman*)}]
    \item \label{item:rmk1perfect} $\mathcal{P}\subseteq \mathcal{D};$
    \item \label{item:rmk2perfect} $\mathsf{d}(B)=s$, $\mathcal{A}\cup \{B\}$ is $\mathsf{d}$-independent, and $\{A,B\}$ is not $\mathsf{d}$-independent for all $B \in \mathcal{P}$;
    \item \label{item:rmk3perfect} $B\bigtriangleup C\notin \mathcal{Z}$ for all distinct $B,C \in \mathcal{P}$.
\end{enumerate}
In fact, it is enough to show that the set $\mathcal{G}$ defined in \eqref{eq:definitionGAGD} contains a perfect subset $\mathcal{P}$ which satisfies item \ref{item:rmk3perfect} (in fact, by the proof above, every $B \in \mathcal{G}$ belongs to $\mathcal{D}$ and satisfies item \ref{item:rmk2perfect}). 

To this aim, recall that $\mathcal{G}$ has full $\nu$-measure, i.e., $\nu(\mathcal{G})=1$. Now, $\nu$ is an atomless Borel probability measure on the Polish space $2^\omega$. 
By Ulam's theorem, $\nu$ is inner regular, hence there exists a compact set $\mathcal{K}\subseteq\mathcal{G}$ such that $\nu(\mathcal{K})>0$, see e.g. \cite[Theorem 23.2]{MR4226142}. 
Observe that $\mathcal{K}$ itself is a Polish space by \cite[Proposition 3.7 and Theorem 3.11]{K}. 
It follows by the Cantor--Bendixson theorem \cite[Theorem 6.4]{K} that $\mathcal{K}$ can be uniquely written as $\mathcal{K}=\mathcal{H}\cup \mathcal{C}$, with $\mathcal{H}$ perfect and $\mathcal{C}$ countable open. 
Since $\nu$ is atomless, $\nu(\mathcal{C})=0$, hence $\nu(\mathcal{H})=\nu(\mathcal{K})>0$, so that $\mathcal{H}$ is a nonempty perfect subset of $\mathcal{K}$.

Next, define
$$
\mathcal{E}:=\{(X,Y)\in 2^\omega\times 2^\omega:X\bigtriangleup Y \in \mathcal{Z}\}.
$$
and consider the continuous map $\Delta:2^\omega\times 2^\omega\to 2^\omega$ defined by $\Delta(X,Y):=X\bigtriangleup Y$. 
Since $\mathcal{E}=\Delta^{-1}[\mathcal{Z}]$, it follows that $\mathcal{E}$ is meager in $2^\omega\times 2^\omega$. 
This implies that $\mathcal{E}\cap(\mathcal{H}\times \mathcal{H})$ is meager in $\mathcal{H}\times \mathcal{H}$. We conclude by Mycielski's theorem, 
see e.g. \cite[Theorem 19.1]{K}, 
there exists a perfect set $\mathcal{P}\subseteq \mathcal{H}$ such that $(B,C)\notin \mathcal{E}$ for all distinct $B,C \in \mathcal{P}$, i.e., item \ref{item:rmk3perfect} holds. 
Taking into account that $\mathcal{P}\subseteq \mathcal{H} \subseteq \mathcal{K}\subseteq \mathcal{G}$, we conclude that $\mathcal{P}$ is a nonempty perfect set satisfying items \ref{item:rmk1perfect}-\ref{item:rmk3perfect}.
\end{rmk}

\medskip

\begin{proof}
[Proof of Corollary \ref{cor:extension1}]
In both cases, $\mathcal{A}$ is not maximal. Hence it is possible to pick $A \in \mathcal{D}$ such that $\mathcal{A}\cup \{A\}$ is $\mathsf{d}$-independent. Thanks to Proposition \ref{prop:notextension2}, there exists $B \in \mathcal{D}$ such that $\mathsf{d}(B)=s$ and $\mathcal{A}\cup \{B\}$ is $\mathsf{d}$-independent. 
\end{proof}

\medskip

Using a minor variation on the proof of Proposition \ref{prop:notextension2}\ref{item2extension}, we will need later a slight improvement of Corollary \ref{cor:extension1}\ref{item2extensioncc} in the case $s=\nicefrac{1}{2}$:
\begin{cor}\label{cor:extensionfinal4} 
Assume $\mathrm{cov}(\mathcal{N})=\mathfrak{c}$. Let $\mathcal{A}$ be a $\mathsf{d}$-independent family with $|\mathcal{A}|<\mathfrak{c}$ and pick $G\subseteq 2^\omega$ which is Lebesgue null. Then there exists $B \in \mathcal{D}\setminus G$ such that $\mathsf{d}(B)=\nicefrac{1}{2}$ and $\mathcal{A}\cup \{B\}$ is $\mathsf{d}$-independent\textup{.} \end{cor}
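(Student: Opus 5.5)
We mimic the probabilistic argument in the proof of Proposition \ref{prop:notextension2}\ref{item2extension}, but with the uniform measure and without needing the auxiliary set $A$. Identify subsets of $\omega$ with points of $2^\omega$ and let $\lambda$ be the usual product measure, under which each $n$ lies in a random $S$ independently with probability $\nicefrac12$. Enumerate $\mathcal{A}=\{A_i: i<\kappa\}$ with $\kappa<\mathfrak{c}$. As before, let
$$
\mathscr{A}:=\{A_{F,\sigma}: F\in[\kappa]^{<\omega},\ \sigma\in 2^F\}.
$$
Then $\omega\in\mathscr{A}$, $|\mathscr{A}|\le \max\{\kappa,\omega\}<\mathfrak{c}$, and $\mathscr{A}\subseteq\langle\mathcal{A}\rangle\subseteq\mathcal{D}$.

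For each $D\in\mathscr{A}$, apply Kolmogorov's strong law of large numbers to $Y_n:=(X_n-\nicefrac12)\mathbf{1}_D(n)$. This gives a set $\mathcal{G}_D$ with $\lambda(\mathcal{G}_D)=1$ such that, for all $S\in\mathcal{G}_D$, we have $S\cap D\in\mathcal{D}$ and $\mathsf{d}(S\cap D)=\tfrac12\,\mathsf{d}(D)$. The computation is the one leading to \eqref{eq:SD-correct}, with $t_0=t_1=\nicefrac12$; it is in fact simpler, since $p_n\equiv\nicefrac12$ and no splitting along $A^0,A^1$ is needed. Now consider
$$
\mathcal{G}:=(2^\omega\setminus G)\cap\bigcap_{D\in\mathscr{A}}\mathcal{G}_D .
$$
Its complement is the union of $G$ and the $\lambda$-null sets $2^\omega\setminus\mathcal{G}_D$. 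This is a family of fewer than $\mathfrak{c}=\mathrm{cov}(\mathcal{N})$ null sets, so it cannot cover $2^\omega$ (the case of finitely or countably many sets needs nothing). Hence $\mathcal{G}\neq\emptyset$. Here the measure is exactly $\lambda$, so we avoid the measure-isomorphism step used in the proposition.

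Pick $B\in\mathcal{G}$. Then $B\notin G$. From $B\in\mathcal{G}_\omega$ we get $B\in\mathcal{D}$ and $\mathsf{d}(B)=\nicefrac12$. The verification that $\mathcal{A}\cup\{B\}$ is $\mathsf{d}$-independent is verbatim as in Proposition \ref{prop:notextension2}, and runs through the three conditions of Definition \ref{def:independent}:
\begin{enumerate}[label={\rm (\textsc{d}\arabic*)}]
\item holds since $\mathsf{d}(B)=\nicefrac12\in(0,1)$.
\item Every element of $\langle\mathcal{A}\cup\{B\}\rangle$ is a finite disjoint union of sets $D$ and $B^\tau\cap D$ with $D\in\mathscr{A}$. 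These lie in $\mathcal{D}$: for $\tau=1$ by the choice of $\mathcal{G}_D$, and for $\tau=0$ by subtracting from $D$.
\item follows from $\mathsf{d}(B\cap\bigcap_{i\in F}A_i)=\tfrac12\prod_{i\in F}\mathsf{d}(A_i)$.
\end{enumerate}

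The main point needing care is the cardinality count. If $\mathcal{A}$ is finite, $\mathscr{A}$ is finite; otherwise $|\mathscr{A}|=|\mathcal{A}|<\mathfrak{c}$. Note also that $\mathcal{A}$ is not assumed to be extendable here, and no set $A$ is needed, so the argument applies to any $\mathsf{d}$-independent $\mathcal{A}$ of size less than $\mathfrak{c}$. Adding the single null set $G$ to the fewer than $\mathrm{cov}(\mathcal{N})$ null sets keeps the family below $\mathrm{cov}(\mathcal{N})$, which is what the argument needs.
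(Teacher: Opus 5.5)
Your proposal is correct and follows the paper's own proof: you replace $\nu$ by $\lambda$, obtain the full-measure sets $\mathcal{G}_D$ for $D\in\mathscr{A}$ via the strong law, intersect with $2^\omega\setminus G$, use $\mathrm{cov}(\mathcal{N})=\mathfrak{c}$ to pick $B$, and verify $\mathsf{d}$-independence as in Proposition \ref{prop:notextension2}. Your extra remarks are accurate refinements of the same argument: the bound $|\mathscr{A}|\le\max\{\kappa,\omega\}$ covers finite $\mathcal{A}$, and the measure-isomorphism step is indeed unnecessary here.
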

\begin{proof}
We will keep the same notation and strategy proof used in Proposition \ref{prop:notextension2}\ref{item2extension}. In place of the auxiliary product measure $\nu$ used there, we simply work with the usual product probability measure $\lambda$ on $2^\omega$. %, that is, the case in which each coordinate has probability $\nicefrac12$. 
Thus, for each $D\in\mathscr A$, there exists a set $\mathcal G_D\subseteq 2^\omega$ of full $\lambda$-measure such that 
$S\cap D\in\mathcal D$ and $\mathsf d(S\cap D)=\frac12\,\mathsf d(D)$ for all $S\in\mathcal G_D$. 
In particular, taking $D=\omega$, we obtain that every $S\in\mathcal G_\omega$ belongs to $\mathcal D$ and satisfies $\mathsf d(S)=\nicefrac12$. 
At this point, instead of \eqref{eq:definitionGAGD}, define
$$
\mathcal G:=(2^\omega\setminus G)\cap \bigcap_{D\in\mathscr A}\mathcal G_D.
$$
Since $|\mathscr A|=\kappa<\mathfrak c=\mathrm{cov}(\mathcal N)$ and  $\lambda(G)=\lambda(2^\omega\setminus\mathcal G_D)=0$ for all $D \in \mathscr{A}$, it follows that $\mathcal G$ is nonempty. 
By the same argument, each $B\in\mathcal G$ satisfies $B \in \mathcal{D}\setminus G$, $\mathsf{d}(B)=\nicefrac{1}{2}$, and $\mathcal{A}\cup \{B\}$ is $\mathsf{d}$-independent.
\end{proof}

\medskip

\begin{proof}
[Proof of Theorem \ref{thm:mainprescribedimage}]
Fix an enumeration $\{D_\alpha:\alpha<\mathfrak c\}$ of all sets $D \in \mathcal{D}$ with $\mathsf{d}(D) \in (0,1)$, and an enumeration $(p_\alpha:\alpha<\mathfrak c)$ with values in $S$ such that $\{p_\alpha:\alpha<\mathfrak c\}=S$. 
We construct by transfinite recursion an increasing chain
$\langle \mathcal A_\alpha:\alpha<\mathfrak c\rangle$ of families of subsets of $\omega$ such that for every $\alpha<\mathfrak c$:
\begin{enumerate}[label={\rm(\alph*)}]
\item \label{item:aclaim} $\mathcal A_\alpha$ is $\mathsf d$-independent  and $|\mathcal A_\alpha|\le |\alpha| <\mathfrak c$ whenever $\alpha>0$;
\item \label{item:bclaim} $\mathsf d[\mathcal A_\alpha]=\{p_\tau: \tau<\alpha \}\subseteq S$;
\item \label{item:cclaim} If $D_\alpha \notin \mathcal{A}_\alpha$ then $\mathcal A_{\alpha+1}\cup\{D_\alpha\}$ is not
$\mathsf d$-independent.
\end{enumerate}

Put $\mathcal A_0:=\emptyset$. At a limit ordinal $\beta<\mathfrak{c}$, define $\mathcal A_\beta:=\bigcup_{\alpha<\beta}\mathcal A_\alpha$. Since $\mathsf{d}$-independence is preserved under unions of increasing chains (as conditions \ref{defi:1indep}--\ref{defi:3indep} are finitary), then $\mathcal{A}_\beta$ is $\mathsf{d}$-independent. In addition, $|\mathcal A_\beta|\le |\beta|<\mathfrak c$ and $\mathsf{d}[\mathcal{A}_\beta]=\bigcup_{\alpha<\beta}\mathsf{d}[\mathcal{A}_\alpha]=\{p_\tau: \tau<\beta\}$. 

Now, fix $\alpha<\mathfrak{c}$ and suppose that $\mathcal{A}_\alpha$ has been defined. 
\begin{itemize}
\item If $D_\alpha \in \mathcal{A}_\alpha$, apply Corollary \ref{cor:extension1} to obtain a set $A_\alpha\in\mathcal D$ such that $\mathsf d(A_\alpha)=p_\alpha$ and $\mathcal A_\alpha\cup\{A_\alpha\}$ is $\mathsf d$-independent. Then set 
\begin{equation}\label{eq:defAalphaplusone}
\mathcal A_{\alpha+1}:=\mathcal A_\alpha\cup\{A_\alpha\}.
\end{equation}

\item If $D_\alpha \notin \mathcal{A}_\alpha$ and $\mathcal A_\alpha\cup\{D_\alpha\}$ is not $\mathsf d$-independent, apply again Corollary \ref{cor:extension1} to obtain a set $A_\alpha\in\mathcal D$ such that
$\mathsf d(A_\alpha)=p_\alpha$ and $\mathcal A_\alpha\cup\{A_\alpha\}$ is $\mathsf d$-independent. Define $\mathcal{A}_{\alpha+1}$ as in \eqref{eq:defAalphaplusone}. 
Then $\mathcal A_{\alpha+1}\cup\{D_\alpha\}$ is not $\mathsf d$-independent, since it contains $\mathcal A_\alpha\cup\{D_\alpha\}$. 

\item If $D_\alpha \notin \mathcal{A}_\alpha$ and $\mathcal A_\alpha\cup\{D_\alpha\}$ is $\mathsf d$-independent, apply Proposition \ref{prop:notextension2} to obtain $A_\alpha\in\mathcal D$ such that $\mathsf d(A_\alpha)=p_\alpha$, $\mathcal A_\alpha\cup\{A_\alpha\}$ is $\mathsf d$-independent, and $\{A_\alpha,D_\alpha\}$ is not $\mathsf{d}$-independent. 
Define $\mathcal{A}_{\alpha+1}$ as in \eqref{eq:defAalphaplusone}. Then $\mathcal A_{\alpha+1}\cup\{D_\alpha\}$ is not $\mathsf d$-independent, since it contains $\{A_\alpha,D_\alpha\}$. 
\end{itemize}
In all cases, $\mathcal A_{\alpha+1}$ is $\mathsf d$-independent, $|\mathcal A_{\alpha+1}|\le |\alpha+1|<\mathfrak c$, and
$\mathsf d[\mathcal A_{\alpha+1}]=\{p_\tau: \tau<\alpha+1\}\subseteq S$ since $\mathsf d(A_\alpha)=p_\alpha\in S$. 

Finally, set
$$
\mathcal A:=\bigcup_{\alpha<\mathfrak c}\mathcal A_\alpha.
$$
Then $\mathcal A$ is $\mathsf d$-independent, and  
we get by construction 
$$
\mathsf{d}[\mathcal{A}]
=\bigcup_{\alpha<\mathfrak{c}}\mathsf{d}[\mathcal{A}_\alpha]
=\{p_\alpha: \alpha< \mathfrak{c}\}
=S.
$$
To conclude the proof, we need to show that $\mathcal{A}$ is maximal. In fact, pick $B\in\mathcal D\setminus \mathcal{A}$ with $\mathsf d(B)\in(0,1)$. Then $B=D_\alpha$ for some $\alpha<\mathfrak c$. Since $B\notin \mathcal{A}$, then $B=D_\alpha \notin \mathcal{A}_\alpha$. By construction, $\mathcal A_{\alpha+1}\cup\{D_\alpha\}$ is not $\mathsf d$-independent, and hence neither is
$\mathcal A\cup\{B\}$. Therefore no set in $\mathcal D\setminus \mathcal{A}$ with asymptotic density in $(0,1)$ can be adjoined to $\mathcal A$ to preserve $\mathsf d$-independence. 
\end{proof}

\medskip

\begin{proof}
[Proof of Theorem \ref{thm:imagefieldsfidtsinct}]
Let $T$ be a transcendental basis of $\mathbb{R}$ over $\mathbb{Q}$ contained in $(0,1)$,\footnote{By the existence of transcendence bases, choose a maximal algebraically independent subset $T \subseteq S=\left\{\frac{1}{x^2+2}:x\in\mathbb{R}\right\}\subseteq(0,1)$.
Since every \(x\in\mathbb{R}\) is algebraic over $\mathbb{Q}\!\left(\frac{1}{x^2+2}\right)$, the field $\mathbb{R}$ is algebraic over $\mathbb{Q}(S)$, hence over $\mathbb{Q}(T)$. Therefore $T$ is a transcendence basis of $\mathbb{R}$ over $\mathbb{Q}$.} that is, a maximal algebraically independent set, see e.g. \cite[Section 8.12]{MR1009787}. It is folklore that $|T|=\mathfrak{c}$. 
%, hence it is possible to pick an enumeration $(t_\alpha: \alpha<\mathfrak{c})$. 
%https://math.stackexchange.com/questions/3957849/basis-and-transcendental-basis
Define an equivalence relation $\mathsf{E}$ on $\mathcal P(T)$ by setting $S\mathsf{E} R$ if and only if $|S\bigtriangleup R|\le \omega$. 
For every $S\subseteq T$, the equivalence class $[S]_\mathsf{E}$ of $S$ has cardinality at most $\mathfrak c$, because
$$
|[S]_{\mathsf{E}}|=|\{S\bigtriangleup C: C\in [T]^{\le\omega}\}| \le \big|[T]^{\le \omega}\big|=\mathfrak c.
$$
It follows that the quotient $\mathcal P(T)/\mathsf{E}$ has cardinality $2^{\mathfrak c}$.
Therefore we can fix a family 
$
\{S_\alpha:\alpha<2^{\mathfrak c}\}\subseteq \mathcal P(T)\setminus\{\emptyset\}
$ 
such that $S_\alpha\bigtriangleup S_\beta$ is uncountable for all distinct $\alpha,\beta<2^{\mathfrak c}$. 

Now, for each $\alpha<2^{\mathfrak c}$, apply Theorem \ref{thm:jon} to the set $S_\alpha\subseteq (0,1)$.
Thus there exist a maximal $\mathsf d$-independent family $\mathcal{A}_\alpha\in \mathscr M$ of cardinality $\mathfrak{c}$ and an at most countable subfamily
$\mathcal{B}_\alpha\subseteq \mathcal{A}_\alpha$ such that
\begin{equation}\label{eq:thmjoneta}
\mathsf d[\mathcal{A}_\alpha\setminus \mathcal{B}_\alpha]=S_\alpha
\qquad\text{ and }\qquad
\mathsf d[\mathcal{B}_\alpha]^\prime\subseteq \{0,1\}.
\end{equation}
Define $C_\alpha:=\mathsf d[\mathcal{B}_\alpha]$. 
Since $\mathcal{B}_\alpha$ is at most countable, then $C_\alpha$ is countable as well.

For each $x\in C_\alpha$, since $T$ is a transcendence basis of $\mathbb R$ over $\mathbb Q$, the real number $x$ is algebraic over $\mathbb Q(T)$.
Hence there exists a finite set $F_x\subseteq T$ such that $x$ is algebraic over $\mathbb Q(F_x)$.
Define
$$
U_\alpha:=\bigcup_{x\in C_\alpha}F_x.
$$
Since $C_\alpha$ is countable and each $F_x$ is finite, the set $U_\alpha\subseteq T$ is countable.
\begin{claim}\label{eq:keycodingfields}
$S_\alpha
\subseteq
T\cap \mathsf d[\langle \mathcal{A}_\alpha\rangle]
\subseteq
S_\alpha\cup U_\alpha$ for each $\alpha<2^{\mathfrak{c}}$. 
\end{claim}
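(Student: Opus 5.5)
The first inclusion is immediate. Since $\mathsf d[\mathcal{A}_\alpha\setminus\mathcal{B}_\alpha]=S_\alpha$ by \eqref{eq:thmjoneta}, we have $S_\alpha\subseteq \mathsf d[\mathcal{A}_\alpha]\subseteq \mathsf d[\langle\mathcal{A}_\alpha\rangle]$. Moreover $S_\alpha\subseteq T$ by choice. So the work is in the second inclusion.

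For that, fix $E\in\langle\mathcal A_\alpha\rangle$. As observed in the proof of Proposition \ref{prop:notextension2}, $E$ is a finite disjoint union of atoms $\bigcap_{A\in\mathcal F}A^{\sigma(A)}$ over a single finite $\mathcal F\subseteq\mathcal A_\alpha$. By \eqref{eq:d3sharp} and finite additivity, $\mathsf d(E)=P(\mathsf d(A):A\in\mathcal F)$, where $P$ is a polynomial with integer coefficients. Concretely, $P$ is a sum of products of factors $y_A$ or $1-y_A$, each variable occurring with degree at most one. Split $\mathcal F=\mathcal F_1\cup\mathcal F_2$ with $\mathcal F_1\subseteq\mathcal A_\alpha\setminus\mathcal B_\alpha$ and $\mathcal F_2\subseteq\mathcal B_\alpha$. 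The values $\mathsf d(A)$ for $A\in\mathcal F_1$ lie in $S_\alpha\subseteq T$; call them $t_1,\dots,t_r$. There may be repetitions, since distinct sets can share a density, and these are handled by substituting equal variables. The values for $A\in\mathcal F_2$ lie in $C_\alpha$, so they are algebraic over $\mathbb Q(U_\alpha)$. Hence $\mathsf d(E)\in K(t_1,\dots,t_r)$, where $K$ is the algebraic closure of $\mathbb Q(U_\alpha)$ in $\mathbb R$.

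Now suppose $t:=\mathsf d(E)\in T$ and $t\notin U_\alpha$. We must show $t\in S_\alpha$, i.e.\ $t\in\{t_1,\dots,t_r\}$. Suppose instead that $t\notin\{t_1,\dots,t_r\}\cup U_\alpha$. Then $t$ is a polynomial expression, with coefficients in $K$, in elements of $T\setminus\{t\}$, namely the $t_i$ and the elements of $U_\alpha$. So $t$ is algebraic over $\mathbb Q(T\setminus\{t\})$, which contradicts the algebraic independence of $T$. This gives $T\cap\mathsf d[\langle\mathcal A_\alpha\rangle]\subseteq S_\alpha\cup U_\alpha$.

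The step needing care is this last algebraic one. The precise claim is: if $u_1,\dots,u_m,t$ are distinct elements of an algebraically independent set, and $t$ lies in the field generated over $\overline{\mathbb Q}$-type algebraic extensions by the $u_j$, then we have a contradiction. This is the standard exchange property of transcendence degree. Indeed, $t\in \overline{\mathbb Q(u_1,\dots,u_m)}$ would make $\{u_1,\dots,u_m,t\}$ algebraically dependent. No degeneracy issue arises from $P$ being constant or from repeated variables, since we only use membership of $t$ in that algebraic closure.
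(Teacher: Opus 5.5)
Your proof is correct and follows essentially the same route as the paper. Like the paper, you write $\mathsf d(E)$ as an integer polynomial in finitely many densities, observe that the densities coming from $\mathcal B_\alpha$ are algebraic over $\mathbb Q(U_\alpha)$, and use the algebraic independence of $T$ to force $t\in S_\alpha\cup U_\alpha$. The only cosmetic difference is that you track the finitely many $t_1,\dots,t_r$, which gives the slightly sharper $t\in\{t_1,\dots,t_r\}\cup U_\alpha$, whereas the paper argues directly that $t$ is algebraic over $\mathbb Q(S_\alpha\cup U_\alpha)$.
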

\begin{proof}
Fix $\alpha<2^{\mathfrak{c}}$. To prove the left inclusion, fix $t\in S_\alpha$.
By \eqref{eq:thmjoneta}, there exists $A\in \mathcal{A}_\alpha\setminus \mathcal{B}_\alpha$ such that
$\mathsf d(A)=t$.
Since $A\in \mathcal{A}_\alpha\subseteq \langle \mathcal{A}_\alpha\rangle$, it follows that
$t\in \mathsf d[\langle \mathcal{A}_\alpha\rangle]$.
Moreover $t\in T$ because $S_\alpha\subseteq T$.
Thus $S_\alpha\subseteq T\cap \mathsf d[\langle \mathcal{A}_\alpha\rangle]$. 

To prove the right inclusion, pick $
t\in T\cap \mathsf d[\langle \mathcal{A}_\alpha\rangle]$. 
Then there exists $X\in \langle \mathcal{A}_\alpha\rangle$ such that $\mathsf d(X)=t$.
Since $X$ belongs to the field generated by $\mathcal{A}_\alpha$, there are a finite set
$\mathcal{F}\subseteq \mathcal{A}_\alpha$ and a set $E\subseteq 2^{\mathcal{F}}$ such that
$
X=\bigcup_{\sigma\in E}\bigcap_{A\in\mathcal{F}} A^{\sigma(A)}$, 
where the union is disjoint.
Hence, using \eqref{eq:d3sharp}, we obtain
$$
\mathsf d(X)
=
\sum_{\sigma\in E}
\prod_{A\in\mathcal{F}}\mathsf d\big(A^{\sigma(A)}\big).
$$
It follows that $\mathsf d(X)$ belongs to the subring of $\mathbb R$ generated by $\{\mathsf d(A):A\in\mathcal{F}\}$. In particular, $
t=\mathsf d(X)\in \mathbb Q\big(\mathsf d[\mathcal{F}]\big)$. 
Now
$$
\mathsf d[\mathcal{F}]
\subseteq
\mathsf d[\mathcal{A}_\alpha]
=
\mathsf d[\mathcal{A}_\alpha\setminus \mathcal{B}_\alpha]\cup \mathsf d[\mathcal{B}_\alpha]
=
S_\alpha\cup C_\alpha
$$
by \eqref{eq:thmjoneta}. Therefore $t\in \mathbb Q(S_\alpha\cup C_\alpha)$. Since every element of $C_\alpha$ is algebraic over $\mathbb Q(U_\alpha)$ by the definition of $U_\alpha$, it follows that
$t$ is algebraic over $\mathbb Q(S_\alpha\cup U_\alpha)$. 
Now, assume towards a contradiction that $t\in T\setminus (S_\alpha\cup U_\alpha)$.
Since $T$ is algebraically independent over $\mathbb Q$, the element $t$ is transcendental over
$\mathbb Q(S_\alpha\cup U_\alpha)$.
This contradicts the previous observation that $t$ is algebraic over $\mathbb Q(S_\alpha\cup U_\alpha)$, and proves the right inclusion. 
\end{proof}

To conclude the proof, suppose towards a contradiction that there exist distinct $\alpha,\beta<2^{\mathfrak{c}}$ such that
$
\mathsf d[\langle \mathcal{A}_\alpha\rangle]
=
\mathsf d[\langle \mathcal{A}_\beta\rangle],
$
and therefore
$$
T\cap \mathsf d[\langle \mathcal{A}_\alpha\rangle]
=
T\cap \mathsf d[\langle \mathcal{A}_\beta\rangle].
$$
Using Claim \ref{eq:keycodingfields}, we obtain that 
$$
S_\alpha\bigtriangleup S_\beta
=
(S_\alpha\setminus S_\beta)\cup (S_\beta\setminus S_\alpha)
\subseteq U_\alpha \cup U_\beta. 
$$
Since both $U_\alpha$ and $U_\beta$ are countable, this would imply that $S_\alpha \mathsf{E} S_\beta$, proving the desired contradiction. 
\end{proof}

\medskip

\begin{rmk}\label{rmk:dense}
  Let $\mathcal{A}$ be an uncountable $\mathsf{d}$-independent family (which is the case if $\mathcal{A}$ is maximal, thanks to Corollary \ref{cor:ordercardinals}). Then $\mathsf{d}[\langle \mathcal{A}\rangle]$ is dense in $[0,1]$. 
  
  In fact, fix a nonempty open set $U\subseteq (0,1)$. Since $\mathcal{A}$ is uncountable, there exists an integer $m\ge 2$ such that 
  $$
  \mathcal{B}:=\{A \in \mathcal{A}: \nicefrac{1}{m} \le \mathsf{d}(A) \le 1-\nicefrac{1}{m}\}
  $$
  is infinite. Let $(A_n: n \in\omega)$ be an enumeration of a countably infinite subset of $\mathcal{B}$. Then 
  $
  \mathsf{d}[\langle \{A_i: i \in n\}\rangle]\subseteq 
  \mathsf{d}[\langle \mathcal{A}\rangle]
  $ 
  for each $n \in \omega$. 
  Now, notice that $\mathcal{Q}_n:=\{\bigcap_{i \in n}A_i^{\sigma(i)}: \sigma \in 2^n\}\subseteq \mathcal{D}$ is a partition of $\omega$ and $q_n:=\max\{\mathsf{d}(S): S \in \mathcal{Q}_n\} \le (1-\nicefrac{1}{m})^n$. Hence $\lim_n q_n=0$. Therefore there exists $n \in \omega$ and a nonempty $F\subseteq 2^n$ such that $A:=\bigcup_{\sigma \in F}\bigcap_{i \in n}A_i^{\sigma(i)} \in \mathcal{D}$ and $\mathsf{d}\left(A\right) \in U$. 

  On the other hand, 
  %the analogue claim fails 
  $\mathsf{d}[\langle \mathcal{A}\rangle]$ might not be dense in $[0,1]$ if $\mathcal{A}$ is countably infinite. 
  %if $\mathcal{A}$ is countable. 
  To this aim, pick a sequence $(p_n:n\in\omega)$ in $(0,1)$ such that 
$p:=\prod_{n}p_n>\nicefrac12$. By Proposition \ref{prop:continuum_indep}, there exists a $\mathsf{d}$-independent family $\mathcal{A}:=\{A_n:n\in\omega\}$ 
such that $\mathsf{d}(A_n)=p_n$ for all $n \in \omega$. Define $B_n:=\bigcap_{i \in n}A_i$ for all $n \in \omega$ and note that $(\mathsf{d}(B_n): n \in \omega)$ is a decreasing sequence with limit $p>\nicefrac{1}{2}$. Now, pick $A \in \langle \mathcal{A}\rangle$. Then there exists $n \in \omega$ such that $A \in \langle \{A_i: i \in n\}\rangle$. If $B_n\subseteq A$ then $\mathsf{d}(A)\ge \mathsf{d}(B_n)>p$. In the opposite case, we have $B_n \cap A=\emptyset$, hence $\mathsf{d}(A)\le \mathsf{d}(\omega\setminus B_n)<1-p$. Therefore $\mathsf{d}[\langle \mathcal{A}\rangle]\cap (1-p,p)=\emptyset$. 
\end{rmk}

\medskip

\begin{proof}
[Proof of Corollary \ref{cor:quantity2c}]
    On the one hand, it is clear that $|\{\mathsf{d}[\langle \mathcal{A}\rangle]: \mathcal{A} \in \mathscr{M}\}|\le |\{\langle \mathcal{A}\rangle: \mathcal{A} \in \mathscr{M}\}|\le |\mathscr{M}|\le 2^\mathfrak{c}$. On the other hand, $2^\mathfrak{c}\le |\{\mathsf{d}[\langle \mathcal{A}\rangle]: \mathcal{A} \in \mathscr{M}\}|$ by Theorem \ref{thm:imagefieldsfidtsinct}. 
\end{proof}

\medskip

\begin{proof}
[Proof of Corollary \ref{cor:notanalytic}] 
    It follows by 
    %Theorem \ref{thm:imagefieldsfidtsinct} 
    Corollary \ref{cor:quantity2c} 
    and the known fact that there are $\mathfrak{c}$ analytic subsets of $2^\omega$.\footnote{Since every analytic subset of $2^\omega$ is the projection of a closed subset of $2^\omega\times \omega^\omega$, and an uncountable Polish space has only $\mathfrak c$ Borel (hence closed) subsets, see e.g. \cite[Chapter 14]{K} and \cite{MR4003698}, it follows that there are at most $\mathfrak c$ analytic subsets of $2^\omega$; the reverse inequality is trivial because $2^\omega$ has $\mathfrak c$ singletons.}
\end{proof}

\medskip

\begin{rmk}
Looking for another strengthening of the existence of nonanalytic maximal $\mathsf{d}$-independent families, it is worth remarking that Larson, Neeman, and Shelah proved in \cite{MR2640071} that it is consistent in $\mathsf{ZFC}$ that there are only $\mathfrak{c}$ universally measurable sets in $2^\omega$ (here, we recall that a subset of $2^\omega$ is universally measurable if it is $\mu$-measurable for any $\sigma$-finite Borel measure $\mu$ on $2^\omega$, see e.g. \cite[p. 155]{K}). 
%; cf. \cite[pp. 576-578]{MR716618} for a related weaker claim. 
Hence, with the same counting argument in Corollary \ref{cor:notanalytic}, it is consistent that there exists a maximal $\mathsf{d}$-independent family $\mathcal{A}$ such that neither $\mathcal{A}$ nor $\langle \mathcal{A}\rangle$ nor $\mathsf{d}[\langle \mathcal{A}\rangle]$ is universally measurable. 
\end{rmk}

Since there are $2^{\mathfrak{c}}$ Lebesgue measurable subsets of $2^\omega$ which have also the Baire property, it is evident that the strategy used in the proof of Corollary \ref{cor:notanalytic} cannot be adapted for Theorem \ref{thm:notbaire} and Theorem \ref{thm:notmeasurable}. 

\medskip

\begin{proof}
[Proof of Theorem \ref{thm:notbaire}] 
For each $n \in \omega$ and $s \in 2^n$, define $Y_n:=\{n\}\times \mathcal{P}(2^n)$ and $X^n_s:=\{(n,A) \in Y_n: s \in A\}$. Taking inspiration from the eighth proof of \cite[Theorem 3.4]{Geschke}, it follows that 
$$
\bigcap_{s \in S}X^n_s \cap \left(Y_n\setminus \bigcup_{t \in T}X^n_t\right)\neq \emptyset
$$
for all disjoint $S,T\subseteq 2^n$ (in fact, it contains $(n,S)$). Now, set $Y:=\bigcup_n Y_n$, pick a bijection $h: Y \to \omega$, and define $X_\sigma:=\bigcup_n h[X^n_{\sigma \upharpoonright n}]$ for each $\sigma \in 2^\omega$. Then $\{X_\sigma:\sigma\in 2^\omega\}\subseteq \mathcal P(\omega)$ is an independent family. 
Moreover, by construction, the map $\sigma\mapsto X_\sigma$ is continuous, since membership on the block $Y_n$ depends only on $\sigma\restriction n$, and injective, since distinct finite strings in $2^n$ yield distinct sets. 
%By \cite[Theorem 3.4]{Geschke}---more precisely, by the block construction in the eighth proof---there exist a partition $\{Y_n: n \in \omega\}$ of $\omega$ into nonempty finite sets and a family $\{X_\sigma:\sigma\in 2^\omega\}\subseteq \mathcal P(\omega)$ which is independent. 
%Moreover, in that construction the map $\sigma\mapsto X_\sigma$ is continuous, since membership on the block $Y_n$ depends only on $\sigma\restriction n$, and injective, since distinct finite strings in $2^n$ yield distinct sets. 
Hence its image is homeomorphic to $2^\omega$, and therefore is a perfect subset of $2^\omega$. 

Let us rename the above perfect independent family by $\mathcal{B}=\{B_\alpha: \alpha<\mathfrak{c}\}$. 
Applying the technique in Remark
\ref{rmk:relationshipindependent}, we construct a $\mathsf{d}$-independent family $\mathcal{A}=\{A_\alpha: \alpha<\mathfrak{c}\}$ such that $\mathsf{d}[\mathcal{A}]=\{\nicefrac{1}{2}\}$. In addition, the map $2^\omega \to 2^\omega$ defined by $B_\alpha\mapsto A_\alpha$ (or, more precisely, $B\mapsto A_B$ with its obvious meaning) is continuous and injective. Hence $\mathcal{A}$ is also a perfect subset of $2^\omega$. Fix also a Bernstein subset $\mathcal{R}\subseteq \mathcal{A}$, cf. \cite[Example 8.24]{K}. 
By Zorn's lemma, pick a maximal $\mathsf d$-independent family $\mathcal M\in\mathscr M$ containing $\mathcal{A}$, and define the family 
$$
\mathcal{F}:=(\mathcal{M}\setminus \mathcal{A}) \cup \mathcal{R} \cup \{\omega\setminus S: S \in \mathcal{A}\setminus \mathcal{R}\}. 
$$
By the observation preceding Theorem \ref{thm:imagefieldsfidtsinct},
$\mathcal{F}$ is still a maximal $\mathsf d$-independent family. Since $\mathcal A\cap\mathcal F=\mathcal R$, 
if $\mathcal F$ had the Baire property in $2^\omega$, then
$\mathcal R$ would have the relative Baire property in $\mathcal A$.
But this is impossible, because a Bernstein subset of a perfect Polish
space does not have the Baire property.
Therefore $\mathcal F$ is a maximal $\mathsf d$-independent family without the Baire property.
\end{proof}

\medskip

\begin{proof}
   [Proof of Theorem \ref{thm:notmeasurable}]
   Fix an enumeration $\{D_\alpha:\alpha<\mathfrak c\}$ of all sets $D \in \mathcal{D}$ with $\mathsf{d}(D) \in (0,1)$, and an enumeration $\{G_\alpha:\alpha<\mathfrak c\}$ of all Lebesgue null $G_\delta$ subsets of $2^\omega$. 
We construct by transfinite recursion an increasing chain
$\langle \mathcal A_\alpha:\alpha<\mathfrak c\rangle$ of families of subsets of $\omega$ such that for every $\alpha<\mathfrak c$:
\begin{enumerate}[label={\rm(\alph*)}]
\item \label{item:ameasclaim} $\mathcal A_\alpha$ is $\mathsf d$-independent and $|\mathcal A_\alpha|\le |\alpha|+\omega<\mathfrak c$ whenever $\alpha>0$\textup{;}
\item \label{item:bmeasclaim} $\mathcal A_{\alpha+1}\setminus G_\alpha\neq\emptyset$\textup{;}
\item \label{item:cmeasclaim} If $D_\alpha\notin \mathcal A_{\alpha+1}$ then $\mathcal A_{\alpha+1}\cup\{D_\alpha\}$ is not $\mathsf d$-independent\textup{.}
\end{enumerate}

Put $\mathcal A_0:=\emptyset$. At a limit ordinal $\beta<\mathfrak{c}$, define $\mathcal A_\beta:=\bigcup_{\alpha<\beta}\mathcal A_\alpha$. Since $\mathsf{d}$-independence is preserved under unions of increasing chains (as conditions \ref{defi:1indep}--\ref{defi:3indep} are finitary), then $\mathcal{A}_\beta$ is $\mathsf{d}$-independent. In addition, $|\mathcal A_\beta|\le |\beta|+\omega<\mathfrak c$.

Now, fix $\alpha<\mathfrak{c}$ and suppose that $\mathcal{A}_\alpha$ has been defined. Apply Corollary \ref{cor:extensionfinal4} to $\mathcal A_\alpha$ and $G_\alpha$ to obtain a set $X_\alpha\in\mathcal D\setminus G_\alpha$ such that $\mathsf d(X_\alpha)=\nicefrac12$ and  $\mathcal A_\alpha\cup\{X_\alpha\}$ is $\mathsf{d}$-independent. Now, consider the following cases: 
\begin{itemize}
\item If $D_\alpha \in \mathcal A_\alpha\cup\{X_\alpha\}$, then set 
\begin{equation}\label{eq:definitionfamilyAalpha}
\mathcal A_{\alpha+1}:=\mathcal A_\alpha\cup\{X_\alpha\}.
\end{equation}

\item If $D_\alpha \notin \mathcal A_\alpha\cup\{X_\alpha\}$ and $\mathcal A_\alpha\cup\{X_\alpha,D_\alpha\}$ is not $\mathsf d$-independent, then define again $\mathcal A_{\alpha+1}$ as in \eqref{eq:definitionfamilyAalpha}. 

\item If $D_\alpha \notin \mathcal A_\alpha\cup\{X_\alpha\}$ and $\mathcal A_\alpha\cup\{X_\alpha,D_\alpha\}$ is $\mathsf d$-independent, apply Proposition \ref{prop:notextension2}\ref{item2extension} to obtain $Y_\alpha\in\mathcal D$ such that $
\mathsf d(Y_\alpha)=\frac12$, $\mathcal A_\alpha\cup\{X_\alpha,Y_\alpha\}$ is $\mathsf d$-independent, and $\{Y_\alpha,D_\alpha\}$ is not $\mathsf d$-independent. 
Then set
$$
\mathcal A_{\alpha+1}:=\mathcal A_\alpha\cup\{X_\alpha, Y_\alpha\}.
$$
\end{itemize}

In all cases, $\mathcal A_{\alpha+1}$ is $\mathsf d$-independent and $|\mathcal A_{\alpha+1}|\le |\alpha+1|+\omega<\mathfrak c$.
In addition, by construction we have $X_\alpha\in \mathcal A_{\alpha+1}\setminus G_\alpha$, so item \ref{item:bmeasclaim} holds.
Lastly, item \ref{item:cmeasclaim} is immediate in the first two cases, while in the third case it follows because $\mathcal A_{\alpha+1}\cup\{D_\alpha\}$ contains $\{Y_\alpha,D_\alpha\}$, which is not $\mathsf{d}$-independent. 

Finally, set
$$
\mathcal A:=\bigcup_{\alpha<\mathfrak c}\mathcal A_\alpha.
$$
Then $\mathcal A$ is $\mathsf d$-independent. 
Now, we show that $\mathcal{A}$ is maximal. In fact, pick $B\in\mathcal D\setminus \mathcal A$ with $\mathsf d(B)\in(0,1)$. Then $B=D_\alpha$ for some $\alpha<\mathfrak c$. Since $B\notin \mathcal A$, we have $B=D_\alpha\notin \mathcal A_{\alpha+1}$. By construction, $\mathcal A_{\alpha+1}\cup\{D_\alpha\}$ is not $\mathsf d$-independent, and hence neither is
$\mathcal A\cup\{B\}$. Therefore $\mathcal A\in\mathscr M$. 

To conclude the proof, it remains to show that $\mathcal A$ is not Lebesgue measurable. To this aim, assume for the sake of contradiction that $\mathcal{A}$ is Lebesgue measurable. 
For each finite $F\subseteq \omega$, define the measure-preserving homeomorphism $\tau_F: 2^\omega \to 2^\omega$ by $\tau_F(A):=A\bigtriangleup F$. Hence each $\mathcal{A}^{F}:=\tau_F[\mathcal{A}]$ is Lebesgue measurable and $\lambda(\mathcal{A}^{F})=\lambda(\mathcal{A})$. Since $\mathcal{A}$ is, in particular, an independent family, it follows that the families in $\{\mathcal{A}^F: F\in [\omega]^{<\omega}\}$ are pairwise disjoint. Since $1=\lambda(2^\omega)\ge \sum_{F\in [\omega]^{<\omega}}\lambda(\mathcal{A}^F)$, we obtain that $\lambda(\mathcal{A})=0$. 
Thus, for every $n\in\omega$ it is possible to pick an open set $U_n\subseteq 2^\omega$ such that
$\mathcal A\subseteq U_n$ and $\lambda(U_n)<2^{-n}$. 
Define
\[
G:=\bigcap_{n\in\omega}U_n.
\]
Then $G$ is a Lebesgue null $G_\delta$ set and $\mathcal A\subseteq G$.
By the choice of the enumeration $(G_\alpha:\alpha<\mathfrak c)$, we have $G=G_\alpha$ for some $\alpha<\mathfrak c$.
However $\mathcal A_{\alpha+1}\setminus G_\alpha\neq\emptyset$  by item \ref{item:bmeasclaim}.
Since $\mathcal A_{\alpha+1}\subseteq \mathcal A\subseteq G_\alpha$, this provides the desired contradiction.
Therefore $\mathcal A$ is not Lebesgue measurable.
\end{proof}

%\nocite{*}
\bibliographystyle{amsplain}
\bibliography{indep}

\end{document}